\documentclass[a4paper, 11pt]{article}

\usepackage[T1]{fontenc}
\usepackage[utf8]{inputenc}
\usepackage[english]{babel}	
\usepackage{lmodern}

\usepackage{amssymb}
\usepackage{amsthm}
\usepackage{amsmath}
\usepackage{mathtools}
\usepackage{mathrsfs}

\usepackage{verbatim}
\usepackage[shortlabels]{enumitem}
\usepackage{hyperref}
\usepackage[dvipsnames]{xcolor}

\usepackage[autostyle]{csquotes}
\usepackage[style=alphabetic,doi=false,url=false,%
	isbn=false,%
  giveninits=true, backend=biber]{biblatex}
    
\usepackage[capitalise, noabbrev]{cleveref}
	\theoremstyle{plain}
		\newtheorem{theorem}{Theorem}
		\newtheorem{corollary}[theorem]{Corollary}
		\newtheorem{lemma}[theorem]{Lemma}
		\newtheorem{proposition}[theorem]{Proposition}
	\theoremstyle{definition}
		\newtheorem{definition}[theorem]{Definition}
		\newtheorem{remark}[theorem]{Remark}

\DeclareMathOperator{\spt}{spt}
\DeclareMathOperator{\ImP}{Im}
\DeclareMathOperator{\Id}{Id}

\DeclarePairedDelimiter{\abs}{\lvert}{\rvert}

\DeclarePairedDelimiter{\pths}{(}{)}
\DeclarePairedDelimiter{\bkts}{[}{]}
\DeclarePairedDelimiter{\brcs}{\lbrace}{\rbrace}
\DeclarePairedDelimiter{\ango}{\langle}{\rangle}

\newcommand{\stset}{\;\middle|\;}

\newcommand{\numberset}{\mathbb}
\newcommand{\N}{\numberset{N}}
\newcommand{\C}{\numberset{C}}

\newcommand{\R}{\numberset{R}}

\newcommand{\heis}{{\mathbb{H}}} 		

\newcommand{\dc}{d_{\mathrm{c}}}		
\newcommand{\dg}{d_{\mathrm{g}}}		
\newcommand{\leb}{\mathscr{L}}					
\newcommand{\prob}{\mathcal{P}}				
\newcommand\restr[2]{{
  \left.\kern-\nulldelimiterspace 
  #1 
  \vphantom{\big|} 
  \right|_{#2} 
  }}
\DeclareUnicodeCharacter{0229}{\c{e}}

\newcommand{\ie}{\emph{i.e.}}


\def\MySet#1{\expandafter\def\csname mc#1\endcsname{\mathcal{#1}}%
			 \expandafter\def\csname ms#1\endcsname{\mathscr{#1}}%
			 }
\def\ALLvec#1{\ifx#1\ALLvec\else\MySet#1\expandafter\ALLvec\fi}
\ALLvec ABCDEFGHIJKLMNOPQRSTUVWXYZ \ALLvec
\def\MySet#1{\expandafter\def\csname #1#1\endcsname{\mathbf{#1}}%
			 }
\def\ALLvec#1{\ifx#1\ALLvec\else\MySet#1\expandafter\ALLvec\fi}
\ALLvec xuK \ALLvec

\addbibresource{bibliography_MMH.bib}
\title{Multi-marginal optimal transport on the Heisenberg group\thanks{BP is pleased
		to acknowledge support from Natural Sciences and Engineering Research
		Council of Canada Grant 04658-2018. 
		AP is partially supported by Gruppo Nazionale per l'Analisi Matematica, la Probabilità e le loro Applicazioni (GNAMPA) of the Istituto Nazionale di Alta Matematica (INDAM).}}

\author{Brendan Pass\thanks{
			Department of Mathematical and Statistical Sciences, University of Alberta,
			Edmonton, Alberta, Canada.
			Email: \href{mailto:pass@ualberta.ca}{pass@ualberta.ca}.},
		Andrea Pinamonti\thanks{
			Dipartimento di Matematica, Università degli Studi di Trento, Trento, Italy.
			Email: \href{mailto:andrea.pinamonti@unitn.it}{andrea.pinamonti@unitn.it}.},
		Mattia Vedovato\thanks{
			Dipartimento di Matematica, Università degli Studi di Trento, Trento, Italy.
			Email: \href{mailto:mattia.vedovato@unitn.it}{mattia.vedovato@unitn.it}.}}
\date{\today}

\begin{document}
\maketitle
\begin{abstract}
We consider the multi-marginal optimal transport of aligning several compactly supported marginals on the Heisenberg group to minimize the total cost, which we take to be the sum of the squared Carnot-Carathéodory distances from the marginal points to their barycenter.  Under certain technical hypotheses, we prove existence and uniqueness of optimal maps.  We also point out several related open questions.
\end{abstract}
\section{Introduction}
	
Given Borel probability measures $\mu_1,\mu_2,...,\mu_m$ on a metric space $X$, and a cost function $c:X^m \rightarrow \mathbb{R}$, Monge's multi-marginal optimal transport problem is to minimize

\begin{equation}
\label{equation: general Monge} \tag{MP}
\int_{X} c(x_1,T_2(x_1)...,T_m(x_1)) d\mu_1(x_1)
\end{equation}
among $(m-1)$-tuples of mappings  $(T_2,...,T_m)$  where each $T_i:X \rightarrow X$ pushes $\mu_1$ forward to  $\mu_i$, $(T_i)_\#\mu_1=\mu_i$; that is, $\mu_1(T_i^{-1}(A)) =\mu_i(A)$ for each  measurable $A \subset X$. When $m=2$, \eqref{equation: general Monge} reduces to the well known classical optimal transport problem of Monge, which has many applications both within and outside of mathematics; see \cite{Villani2003,Villani2009,Santambrogio2015} for comprehensive surveys. Of particular interest  are cost functions reflecting the underlying geometry of $X$; the most thoroughly studied (and most important in applications) case arises when $c(x_1,x_2) = d^2(x_1,x_2)$ is the metric distance squared.  For this cost, when $X=\mathbb{R}^n$, a seminal theorem of Brenier \cite{Brenier1987, Brenier1991} asserts that there exists a unique minimizer to \eqref{equation: general Monge}.  This result has been extended to much more exotic geometrical settings, beginning with the work of McCann when $X$ is a Riemannian manifold \cite{McCann2001}.  Of special interest in the present paper will be the case where $X =\heis^n$ is the Heisenberg group equipped with the Carnot-Carathéodory distance, for which the existence and uniqueness of optimal maps was established by Ambrosio and Rigot in \cite{Ambrosio2004}.  Although it is not directly relevant here, we mention that these properties have been extended to a wide class of general cost functions (those satisfying the \textit{twist} condition, injectivity of $x_2 \mapsto D_{x_1}c(x_1,x_2)$; see, for example, \cite{Santambrogio2015}). Moreover, it's worth noting that analogous results are available in more general subriemannian spaces (again taking the squared Carnot-Carathéodory distance $\dc^2$ as a cost function, see \cite{FigalliRifford2010}), and existence holds on the Heisenberg group with the distance cost function ($c=\dc$, see \cite{DePascale2011}).

Multi-marginal problems (ie, \eqref{equation: general Monge} with $m\geq 3$) have received increasing attention in recent years, due to their own fairly wide variety of applications.  In particular, we note that for a natural extension of the quadratic cost on $\mathbb{R}^n$, the existence of unique solutions was proven in a pioneering paper by Gangbo and Swiech \cite{Gangbo1998}.  These results were extended to Riemannian manifolds by Kim and Pass \cite{Kim2015}, but have so far not been established on many other spaces (an exception is Alexandrov spaces; see \cite{Jiang2017}).  More generally, the multi-marginal problem is much more delicate than its two marginal counterpart; existence and uniqueness results have only been established for very special general cost functions, and in fact many examples of multi-marginal cost functions have been exhibited for which solutions to the relaxed, Kantorovich version of the problem (\eqref{equation: general kantorovich} below) do not induce solutions to \eqref{equation: general Monge} and are not unique (see \cite{Pass2015} for an overview).  The purpose of the present short paper is to show that the techniques of Ambrosio-Rigot can be combined with those of Kim-Pass to yield the existence of unique solutions to the multi-marginal optimal transport problem on the Heisenberg group; see Theorem \ref{theorem: monge} and Corollary \ref{corollary: final result} below.

Our proof requires two technical assumptions, not present in either \cite{Ambrosio2004} or \cite{Kim2015}, which may seem surprising to experts.  First, we require in Theorem \ref{theorem: monge} that the mapping from optimally coupled points to their barycenter is injective; in Corollary \ref{corollary: final result} this follows from the sufficient condition that each $\mu_i$ assigns mass $0$ to the set of barycenters of points in the supports on the marginals.  This fact can be proven in the Riemannian setting, but we were unable to prove it on the Heisenberg group without additional assumptions, essentially because semi-concavity of the distance squared fails along the diagonal.  In the Riemannian case, global semi-concavity allows one to use a now quite standard argument, originally introduced in \cite{McCann2001}, to show that differentiability of Kantorovich potentials implies differentiability of the squared distance for optimally coupled points.  The second additional assumption (present in Corollary \ref{corollary: final result} but not Theorem \ref{theorem: monge}) is that \textit{all}, rather than only the first, marginals must be absolutely continuous with respect to Lebesgue measure. This is because  we use almost everywhere differentiability of each Kantorovich potential to ensure differentability of the distance squared from each marginal  point $x_i$ to the barycenter $y$ (under the assumption above ensuring $y \neq x_i$), allowing us to invert the relationship $(x_1,...,x_m) \mapsto y$.  We do not know at this time whether either of these assumptions can be removed.  However, we note that somewhat surprisingly, the semi-concavity of the cost function issue vanishes when we replace the quadratic cost with the $p$ power-cost for $p>2$; see \cref{subsection: higher p}.

The manuscript is organized as follows: in section 2, we recall basic facts about both the Heisenberg group and multi-marginal optimal transport.  In section 3, we introduce the cost function we will work with and establish some preliminary properties of it.  Section 4 is devoted to the precise statement and proof of our main results: Theorem \ref{theorem: monge} and Corollary \ref{corollary: final result}.

\section{Preliminaries: multimarginal optimal transport on \texorpdfstring{$\heis^n$}{Hn}}

\subsection{Heisenberg group}
In this paper, we will represent the Heisenberg group $\heis^{n}$ as the set $\C^n\times\R \equiv \R^{2n+1}$; thus its points will be described as $x=[z,t]=[\zeta+i\eta,t]=(\zeta,\eta,t)$, with $z\in\mathbb C^n$, $\zeta,\eta\in\R^n$, $t\in\R$. The group operation on $\heis^{n}$ will be defined as follows: whenever $x=[z,t]\in \heis^{n}$ and $x'=[z',t']\in \heis^{n}$,
	\begin{equation}\label{equation: group operation}
	x\cdot x'	\doteq \bkts*{z+z', t+t'+ 2 \ImP \pths*{\ango*{ z,\bar{z'}}}}.
	\end{equation}
As a consequence, it is easy to verify that the group identity is the origin 0 and the inverse of a point is given by $[z,t]^{-1}=[-z,-t]$.
Even though they will only play a role ``in the background'' in the current work, we also introduce for the sake of completeness the following family of \emph{non-isotropic dilations}, since they are fundamental in the geometry of the Heisenberg group: if $x=[z,t]\in \heis^{n}$  and $\lambda>0$,
	\begin{equation}\label{equation: dilations}
	\delta_{\lambda}(x)\doteq[\lambda z,\lambda^2t]. 
	\end{equation}
The Heisenberg group $\heis^n$ admits the structure of a Lie group of topological dimension $2n+1$. Its Lie algebra $\mathfrak{h}_n$ of left invariant vector fields is (linearly) generated by
\begin{equation}\label{equation: left inv vector fields}
X_j =\frac{\partial}{\partial x_j}+2y_j\frac{\partial}{\partial t},\quad 
Y_j=\frac{\partial}{\partial y_j}-2x_j\frac{\partial}{\partial t },\quad 
\text{for $j=1,\dots,n$;}
\quad\text{and } Z=\frac{\partial}{ \partial t};
\end{equation}
notice that the only non-trivial commutator relation is
\begin{equation}
[X_j,Y_j] = - 4Z,
\end{equation}
valid for any $j=1,\dots,n$. The vector fields $X_1,\dots,X_{n}, Y_1,\dots, Y_n$ will be called \emph{horizontal vector fields}: the group $\heis^{n}$ endowed with this Lie algebra has the structure of a Carnot group. 

In our main Monge-type result, we will need the following geometric result about the structure of the Heisenberg group: the space $\heis^n$ can be parametrized by a system of ``adapted spherical coordinates''.

\begin{proposition}[Spherical coordinates, {\cite[Proposition 3.9]{Ambrosio2004}}]\label{proposition: parametrization}
Let 
	\begin{equation}
		\mathbb{S}\doteq \brcs*{a+i b \in\C^n\stset \abs{a+ib}=1},
	\end{equation}	
and let $\mathbf{D}\doteq \mathbb{S}\times(-2\pi,2\pi)\times (0,+\infty).$
Define the following map:
	\begin{equation}
	\begin{aligned}
		\Upsilon: \qquad	\mathbf{D} &\longrightarrow 		\heis^n\\
					(a+ib,v,r) &\longmapsto	    (\xi_1,\dots,\xi_n,\eta_1,\dots,\eta_n,t) 
	\end{aligned}
	\end{equation}
with 
	\begin{equation}
	\begin{aligned}
	\xi_j 	&\doteq \frac{b_j(1-\cos v)+a_j \sin v}{v} r\\
	\eta_j 	&\doteq \frac{-a_j(1-\cos v)+b_j \sin v}{v} r\\
	t		&\doteq 2\frac{v-\sin v}{v^2} r^2.
	\end{aligned}
	\end{equation}
Then $\Upsilon$ is a $C^1$ diffeomorphism from $\mathbf{D}$ onto $\heis^n\setminus L$, where $L=\{0\} \times \mathbb{R}$ is the vertical axis.
\end{proposition}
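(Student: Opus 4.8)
The plan is to verify directly the three defining features of a $C^1$ diffeomorphism for $\Upsilon$: (i) $C^1$-regularity of $\Upsilon$ on $\mathbf{D}$; (ii) that $\Upsilon$ is a bijection from $\mathbf{D}$ onto $\heis^n\setminus L$; and (iii) $C^1$-regularity of $\Upsilon^{-1}$. Throughout, the strategy is to work with explicit formulas, since the inverse of $\Upsilon$ can be written down by hand.

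For (i), the only point requiring care is the slice $v=0$, which lies inside $\mathbf{D}$. But the three functions $v\mapsto\sin v/v$, $v\mapsto(1-\cos v)/v$ and $v\mapsto(v-\sin v)/v^2$ have power series with infinite radius of convergence, hence extend to $C^\infty$ (indeed real-analytic) functions on $(-2\pi,2\pi)$; consequently each component of $\Upsilon$ is the restriction to $\mathbf{D}$ of a function that is $C^\infty$ on the open set $\mathbb{R}^{2n}\times(-2\pi,2\pi)\times(0,+\infty)$, so $\Upsilon$ is $C^\infty$ on the $(2n+1)$-manifold $\mathbf{D}=\mathbb{S}\times(-2\pi,2\pi)\times(0,+\infty)$, in particular $C^1$.

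For (ii), I would first record two ``radial identities'', obtained by a short computation using $\abs{a+ib}=1$: with $z\doteq\xi+i\eta$,
\begin{equation*}
\abs{z}^2=\frac{2(1-\cos v)}{v^2}\,r^2,\qquad t=\frac{2(v-\sin v)}{v^2}\,r^2,
\end{equation*}
the coefficients being understood in the removable-singularity sense at $v=0$ (limits $1$ and $0$). The first identity shows $\abs{z}^2>0$ on $\mathbf{D}$ (it equals $r^2$ when $v=0$), hence $\Upsilon(\mathbf{D})\subseteq\heis^n\setminus L$; dividing the two identities gives $t/\abs{z}^2=\mu(v)$, where $\mu(v)\doteq(v-\sin v)/(1-\cos v)$ (with $\mu(0)=0$). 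The crux is the claim that $\mu$ is a strictly increasing $C^1$ bijection from $(-2\pi,2\pi)$ onto $\mathbb{R}$: oddness, $\mu(v)\to\pm\infty$ as $v\to\pm2\pi$, and $\mu'(0)=1/3$ are immediate, while strict monotonicity on $(0,2\pi)$ follows from $\mu'(v)=g(v)/(1-\cos v)^2$ once one shows $g(v)\doteq2-2\cos v-v\sin v>0$ there; this in turn follows from $g(0)=g'(0)=0$ and $g''(v)=v\sin v$ by examining the sign of $g''$ — hence of $g'$, hence of $g$ — separately on $(0,\pi)$ (where $g''>0$) and on $(\pi,2\pi)$ (where $g''<0$, so $g'$ decreases from $g'(\pi)=\pi>0$ to $g'(2\pi)=-2\pi<0$, forcing $g$ to rise then fall from $g(\pi)=4$ to $g(2\pi)=0$). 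Granting the monotonicity of $\mu$, the inverse of $\Upsilon$ is built explicitly: given $(\xi,\eta,t)\in\heis^n\setminus L$, set $v\doteq\mu^{-1}\!\big(t/\abs{z}^2\big)\in(-2\pi,2\pi)$, then $r\doteq\abs{z}\,\big(v^2/(2(1-\cos v))\big)^{1/2}>0$ (bracket equal to $1$ at $v=0$), and finally recover $(a_j,b_j)$ by inverting the $2\times2$ system
\begin{equation*}
\begin{pmatrix}\xi_j\\ \eta_j\end{pmatrix}=\frac{r}{v}\begin{pmatrix}\sin v & 1-\cos v\\ -(1-\cos v) & \sin v\end{pmatrix}\begin{pmatrix}a_j\\ b_j\end{pmatrix},
\end{equation*}
whose coefficient matrix has determinant $2(1-\cos v)$, nonzero for $v\neq0$ (and at $v=0$ one simply reads $(a_j,b_j)=(\xi_j,\eta_j)/r$). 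The first radial identity forces $\abs{a+ib}=1$, so the triple $(a+ib,v,r)$ lies in $\mathbf{D}$, and evaluating $\Upsilon$ at it — using the two radial identities and the fact that the $\xi_j$- and $\eta_j$-formulas of $\Upsilon$ just re-solve the same linear system forward — confirms it is a preimage of $(\xi,\eta,t)$. Hence $\Upsilon$ is a bijection onto $\heis^n\setminus L$ and the above formulas define $\Upsilon^{-1}$.

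For (iii), one reads $C^1$-regularity of $\Upsilon^{-1}$ off these formulas: $(\xi,\eta,t)\mapsto t/\abs{z}^2$ is $C^\infty$ on $\heis^n\setminus L$; $\mu^{-1}$ is $C^1$ since $\mu\in C^1$ with $\mu'>0$; the expression for $r$ is $C^\infty$ in $(\xi,\eta,v)$ once the removable pole is cleared; and combining the two rows of the $2\times2$ system into
\begin{equation*}
a_j=\frac1r\Big(\tfrac{v\sin v}{2(1-\cos v)}\,\xi_j-\tfrac v2\,\eta_j\Big),\qquad b_j=\frac1r\Big(\tfrac v2\,\xi_j+\tfrac{v\sin v}{2(1-\cos v)}\,\eta_j\Big)
\end{equation*}
exhibits $(a,b)$ as $C^\infty$ in $(v,r,\xi,\eta)$, using that $v\sin v/(1-\cos v)$ extends $C^\infty$ across $v=0$; since the resulting $\mathbb{R}^{2n}$-valued map has image in the embedded submanifold $\mathbb{S}$, it is $C^1$ into $\mathbb{S}$, and composing with the (smooth) formulas for $v$ and $r$ shows $\Upsilon^{-1}\colon\heis^n\setminus L\to\mathbf{D}$ is $C^1$. (Alternatively, once bijectivity is in hand one could check $\det D\Upsilon\neq0$ everywhere and invoke the inverse function theorem, but that requires a $(2n+1)\times(2n+1)$ Jacobian computation in the sphere parametrization, which the explicit route sidesteps.) I expect the monotonicity lemma for $\mu$ — equivalently, positivity of $g(v)=2-2\cos v-v\sin v$ on $(0,2\pi)$ — to be the only genuinely non-routine step; the rest is careful bookkeeping around the removable singularities at $v=0$.
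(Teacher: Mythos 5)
Your proof is correct. Note first that the paper itself offers no proof of this statement: it is imported verbatim as Proposition~3.9 of \cite{Ambrosio2004}, where the map $\Upsilon$ arises as the endpoint parametrization of unit-speed geodesics from the origin (with $a+ib$ the initial horizontal direction, $r$ the length and $v$ the swept angle), and the diffeomorphism property is established there by essentially the same explicit computation you carry out. Your self-contained verification checks out in all the places where it could go wrong: the radial identity $\abs{z}^2=2(1-\cos v)r^2/v^2$ is right (the cross terms in $\xi_j^2+\eta_j^2$ cancel and $\sum_j(a_j^2+b_j^2)=1$ does the rest), and it correctly reduces injectivity and surjectivity to the bijectivity of $\mu(v)=(v-\sin v)/(1-\cos v)$ from $(-2\pi,2\pi)$ onto $\R$, which you establish via the positivity of $g(v)=2-2\cos v-v\sin v$ on $(0,2\pi)$; your two-stage sign analysis of $g''(v)=v\sin v$ on $(0,\pi)$ and $(\pi,2\pi)$, anchored at $g(\pi)=4$ and $g(2\pi)=0$, is sound. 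The $2\times 2$ inversion with determinant $2(1-\cos v)$, the removable singularities at $v=0$ (where $\mu(v)\sim v/3$ and $v\sin v/(1-\cos v)\to 2$ extend analytically), and the check that the recovered triple satisfies $\abs{a+ib}=1$ and reproduces $t$ are all handled correctly, so both bijectivity and the $C^1$ (indeed $C^\infty$) regularity of $\Upsilon^{-1}$ follow as you argue, without needing the Jacobian computation you rightly sidestep. The one presentational remark: when you ``set $v\doteq\mu^{-1}(t/\abs{z}^2)$'' it is worth stating explicitly that \emph{any} preimage must have this $v$ (because the radial identities hold for every preimage and $\mu$ is injective), since that is what upgrades your construction from surjectivity to bijectivity; the logic is present in your write-up but deserves one sentence.
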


Thanks to \cref{proposition: parametrization}, one can define a generalization of the exponential map to the Heisenberg group:

\begin{definition}[Exponential map on $\heis^n$]\label{definition: exponential}
Let $\Upsilon$ be the diffeomorphism introduced in \cref{proposition: parametrization}. We define the \emph{exponential map} $\exp_\heis: \C^n\times \bkts{-\frac{\pi}{2},\frac{\pi}{2}}\to \heis^n$ as
	\begin{equation}
	\exp_\heis (A+i B, w)\doteq \Upsilon\pths*{ \frac{A+iB}{\abs{A+i B}}, 4 w , \abs{A+iB}}
	\end{equation}
if $A+iB\neq 0$, and $\exp_\heis(0,w)=0$ for all $w$.
\end{definition}

As shown in \cite[Lemma 6.8]{Ambrosio2004}, this definition is consistent with the classical Riemannian one: one can define in a standard way a Riemannian approximation to the subriemannian structure of $\heis^n$; and now $\exp_\heis$ can be recovered as a limit of Riemannian exponential coordinates.

Other important properties of the Heisenberg group (such as the metric structure induced by the Carnot-Carathéodory distance) will be introduced in \cref{section: cost functions}; we refer the reader to \cite{SerraCassano2016} for a more complete introduction to the Heisenberg group and Carnot groups in general.
\subsection{Multimarginal optimal transport}
In what follows, $\mu_1,\dots,\mu_m\in\prob(\heis^n)$ will be Borel probability measures on the Heisenberg group $\heis^n$. We'll also assume they are absolutely continuous with respect to the Lebesgue measure $\leb^{2n+1}$ (which is, up to a constant, the Haar measure of $\heis^n$).
We will call \textbf{Kantorovič problem} the following minimization problem:
	\begin{equation}
	\label{equation: general kantorovich} \tag{KP}
	\inf \brcs*{\int_{(\heis^n)^m} c(x_1,\dots,x_m) \,d\gamma(x_1,\dots,x_m)\stset \gamma \in\Pi\pths*{\mu_1,\dots,\mu_m}}
	\end{equation}
where $c:(\heis^n)^m\to\R$ is a suitable cost function, $\Pi\pths*{\mu_1,\dots,\mu_m}$ is the following family of \textbf{transport plans}:
	\begin{equation} \label{equation: transport plans}
	\Pi\pths*{\mu_1,\dots,\mu_m}\doteq \brcs*{\mu \in\prob\pths*{(\heis^n)^m} \stset \text{${\pi_i}_\sharp\mu = \mu_i$ for all $i=1,\dots,m$}},
	\end{equation}
and ${\pi_i} _\sharp\mu$ denotes the push-forward measure of $\mu$ with respect to the projection on the $i^{\textrm{th}}$ component. 
\par
We also consider the following \textbf{dual problem}:
	\begin{equation}
	\label{equation: general dual problem} \tag{DP}
		\sup\brcs*{\sum_{i=1}^m \int_{\heis^n} u_i(x_i)\,d\mu_i(x_i)\stset 
					\begin{gathered}
						\text{$u_i\in L^1_{\mu_i}(\heis^n)$ for all $i=1,\dots,m$,}\\
						{\textstyle \sum_{i=1}^m u_i(x_i)\leq c(x_1,\dots,x_m)}				
					\end{gathered}
					}.
	\end{equation}
In what follows, we will often denote by $\xx$ the $m$-tuple $(x_1,\dots,x_m)\in(\heis^n)^m$.

\paragraph{Restriction to compact subsets} From now on, we will always assume $\mu_1,\dots,\mu_m$ are compactly supported. If we define the compact sets
\begin{equation}\label{equation bigK}
K_i\doteq \spt(\mu_i)\subset \heis^n \quad \text{and} \quad \KK\doteq \prod_i\spt(\mu_i) \subset (\heis^n)^m,
\end{equation}
then the Kantorovič problem~\eqref{equation: general kantorovich} is actually equivalent to 
	\begin{equation}
	\label{equation: kantorovich} \tag{$\mathrm{KP}_c$}
	\inf \brcs*{\int_{\KK} c(x_1,\dots,x_m) \,d\gamma(x_1,\dots,x_m)\stset \gamma \in\Pi\pths*{\mu_1,\dots,\mu_m}}.
	\end{equation}

Indeed, for any $\mu\in\Pi\pths*{\mu_1,\dots,\mu_m}$, we have $\pi_i(\spt \mu)\subset \spt(\pi_i\sharp \mu)=\spt(\mu_i)$ (by elementary properties of measures and by continuity of $\pi_i$), thus any transport plan is supported in $\prod_i\spt(\mu_i)=\KK$. The dual problem~\eqref{equation: general dual problem}, on the other hand, is actually equivalent to the following:
	\begin{equation}
	\label{equation: dual problem} \tag{$\mathrm{DP}_c$}
		\sup\brcs*{\sum_{i=1}^m \int_{K_i} u_i(x_i)\,d\mu_i(x_i)\stset 
					\begin{gathered}
						\text{$u_i\in L^1_{\mu_i}(K_i)$ for all $i=1,\dots,m$,}\\
						{\textstyle \sum_{i=1}^m u_i(x_i)\leq c(x_1,\dots,x_m)}				
					\end{gathered}
					},
	\end{equation}	
in the sense that any solution of~\eqref{equation: general dual problem} restricts to a solution of~\eqref{equation: dual problem} and vice versa, a solution of~\eqref{equation: dual problem} extends to a solution of~\eqref{equation: general dual problem}, for example by defining the value of $u_i$ to be $-\infty$ out of $K_i$.

\begin{definition}
Let $c:(\heis^n)^m\to\R$ be a cost function. We say that the $m$-tuple of functions $\uu=\pths*{u_1,\dots,u_m}:\heis^n \to(\R\cup\brcs{-\infty})^m$ is \emph{$c$-conjugate} if, for some $U\subset\heis^n$ and for every fixed $1\leq i \leq m$ and $x_i\in \heis^n$, it holds:
	\begin{equation}
	u_i(x_i) = \inf\brcs*{c(x_1,\dots,x_m)-\sum_{j\neq i} u_j(x_j)\stset \text{$x_j\in U$ for any $j\neq i$}}.
	\end{equation}
Moreover, when $\uu=\pths*{u_1,\dots,u_m}$ is such an $m$-tuple, we denote by $\Gamma_\uu$ the set
	\begin{equation}\label{equation: graph of superdifferential}
	\Gamma_\uu \doteq \brcs*{\xx\in U^m \stset \sum_{i=1}^m u_i(x_i)=c(\xx)}.
	\end{equation}
\end{definition}

\begin{definition}[Cyclical monotonicity]\label{definition: cyclical monotonicity}
Let $\Gamma\subset (\heis^n)^m$. We say that $\Gamma$ is \textbf{$c$-cyclically monotone} if the following holds: for any $N\in\N$, for any $m$-tuple $(\sigma_1,\dots,\sigma_m)$ of permutations of $N$ elements, and for any subset
	\begin{equation}
 	\brcs*{\xx^j=(x_1^j,\dots , x_m^j)\in (\heis^n)^m \stset j=1,\dots,N}\subset \Gamma
	\end{equation}
the following inequality holds true:
	\begin{equation}
	\sum_{j=1}^N c(x_1^j,\dots , x_m^j)\leq \sum_{j=1}^N c\pths[\Big]{x_1^{\sigma_1(j)},\dots x_m^{\sigma_m(j)}}.
	\end{equation}
\end{definition}

\begin{proposition}
Let $c:(\heis^n)^m\to\R$ be a cost function, and let $(u_1,\dots,u_m):\heis^n\to\R^m$ be a $c$-conjugate $m$-tuple. Then the set $\Gamma_\uu$ defined in \cref{equation: graph of superdifferential} is $c$-cyclically monotone.
\end{proposition}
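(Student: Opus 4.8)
The plan is to rely solely on the one-sided inequality encoded in the notion of $c$-conjugacy, together with the fact that this inequality is saturated exactly on $\Gamma_\uu$ and that finite sums are insensitive to permuting their terms; this is the classical argument showing that the contact set of a $c$-conjugate family is $c$-cyclically monotone.

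First I would extract the basic inequality. By definition of $c$-conjugacy, fix any index $i$; then for every $x_i\in\heis^n$ and every choice of $x_j\in U$ with $j\neq i$, the quantity $c(x_1,\dots,x_m)-\sum_{j\neq i}u_j(x_j)$ is one of the competitors in the infimum defining $u_i(x_i)$, so $u_i(x_i)\le c(x_1,\dots,x_m)-\sum_{j\neq i}u_j(x_j)$. Rearranging, $\sum_{i=1}^m u_i(x_i)\le c(\xx)$ for every $\xx$ all of whose coordinates lie in $U$, with equality precisely when $\xx\in\Gamma_\uu$, by \eqref{equation: graph of superdifferential}.

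Next, fix $N\in\N$, permutations $\sigma_1,\dots,\sigma_m$ of $\{1,\dots,N\}$, and points $\xx^1,\dots,\xx^N\in\Gamma_\uu$. Since $\Gamma_\uu\subset U^m$, the equality case gives $\sum_{j=1}^N c(\xx^j)=\sum_{j=1}^N\sum_{i=1}^m u_i(x_i^j)$. For each $j$ the shuffled tuple $\pths[\big]{x_1^{\sigma_1(j)},\dots,x_m^{\sigma_m(j)}}$ again has all coordinates in $U$ — the $i$-th one being the $i$-th coordinate of $\xx^{\sigma_i(j)}\in U^m$ — so the basic inequality applies and yields $\sum_{i=1}^m u_i(x_i^{\sigma_i(j)})\le c\pths[\big]{x_1^{\sigma_1(j)},\dots,x_m^{\sigma_m(j)}}$. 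Summing over $j$, exchanging the two summations, and using that $j\mapsto\sigma_i(j)$ is a bijection of $\{1,\dots,N\}$ for each fixed $i$, the left-hand side becomes $\sum_{i=1}^m\sum_{j=1}^N u_i(x_i^j)=\sum_{j=1}^N c(\xx^j)$; hence $\sum_{j=1}^N c(\xx^j)\le\sum_{j=1}^N c\pths[\big]{x_1^{\sigma_1(j)},\dots,x_m^{\sigma_m(j)}}$, which is exactly the inequality in \cref{definition: cyclical monotonicity}.

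I do not expect any genuine obstacle. The only points deserving care are the \emph{direction} of the inequality built into $c$-conjugacy — it is $\sum_i u_i\le c$ that is available, and it holds with equality on $\Gamma_\uu$ — and the observation that each permuted tuple is still an admissible competitor, i.e.\ has all its coordinates in $U$, which is exactly what the inclusion $\Gamma_\uu\subset U^m$ provides. One may also note that on $\Gamma_\uu$ every $u_i(x_i^j)$ is finite, so all the sums above are sums of real numbers and the rearrangements are legitimate.
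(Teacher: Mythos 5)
Your proposal is correct and follows essentially the same argument as the paper: the inequality $\sum_i u_i(x_i)\le c(\xx)$ from $c$-conjugacy applied to each permuted tuple, equality on $\Gamma_\uu$, and an exchange of sums using that each $\sigma_i$ is a bijection. The extra remarks on finiteness of the $u_i(x_i^j)$ and on the permuted tuples lying in $U^m$ are sensible precautions but do not change the substance.
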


\begin{proof}
Fix $N\in\N$, a $m$-tuple $\pths{\sigma_1,\dots,\sigma_m}$ of $N$-permutations and a subset $\brcs*{\xx^j}_{j=1}^N\subset\Gamma_\uu$. Then
	\begin{equation}
	\begin{split}
		\sum_{j=1}^N c\pths[\Big]{x_1^{\sigma_1(j)},\dots x_m^{\sigma_m(j)}} &\geq
		\sum_{j=1}^N \sum_{i=1}^m u_i \pths{x_i^{\sigma_i(j)}} = \sum_{i=1}^m \sum_{j=1}^N u_i \pths{x_i^{\sigma_i(j)}} = \\
		&= \sum_{i=1}^m \sum_{j=1}^N u_i (x_i^j) = \sum_{j=1}^N c(x_1^j,\dots x_m^j).
	\end{split} 
	\end{equation}
\end{proof}

The following Existence and Duality Theorem is a classical result in optimal transportation: even though it's  not present in literature at this level of generality (multimarginal and subriemannian), the classical proof can be easily adapted to this context (see for example \cite[Theorem 1.17 and Remark 1.18]{Ambrosio2013}).
\begin{theorem}[Existence and Duality]\label{theorem: existence and duality}
Let $\mu_1,\dots,\mu_m\in\prob(\heis^n)$ be compactly supported, Borel probability measures. Let $c:(\heis^n)^m\to\R$ be a continuous cost function. Then:
	\begin{enumerate}[(i)]
	\item \textnormal{Existence for \eqref{equation: kantorovich}:} there exists $\gamma\in\Pi(\mu_1,\dots,\mu_m)$ which achieves the minimum in the Kantorovič problem~\eqref{equation: kantorovich};
	\item \textnormal{Existence for \eqref{equation: dual problem}:} there exists a $c$-conjugate $m$-tuple $(u_1,\dots,u_m)$ which achieves the maximum in the dual problem~\eqref{equation: dual problem};
	\item \textnormal{Duality:} the minimum in~\eqref{equation: kantorovich} and the maximum in~\eqref{equation: dual problem} coincide;
	\item If $\gamma$ is an optimal plan and $\uu=(u_1,\dots,u_m)$ is a $c$-conjugate solution to~\eqref{equation: dual problem}, then $\spt(\gamma)\subset \Gamma_{\uu}$.
	\end{enumerate}
\end{theorem}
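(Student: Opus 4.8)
The plan is to run the classical existence--duality machinery of optimal transport, adapted to the compactly supported multimarginal setting. Part (i) follows from the direct method: since each $\mu_i$ is compactly supported, every $\gamma\in\Pi(\mu_1,\dots,\mu_m)$ is concentrated on the fixed compact set $\KK$ (as already observed above), so $\Pi(\mu_1,\dots,\mu_m)\subset\prob(\KK)$, which is weak-$*$ compact because $\KK$ is compact; moreover $\Pi(\mu_1,\dots,\mu_m)$ is nonempty (it contains $\mu_1\otimes\cdots\otimes\mu_m$), convex and weak-$*$ closed, and the functional $\gamma\mapsto\int_{\KK}c\,d\gamma$ is linear and weak-$*$ continuous since $c\in C(\KK)$. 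Hence the infimum in \eqref{equation: kantorovich} is attained.

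For the rest, one first records weak duality: for any competitor $(u_1,\dots,u_m)$ in \eqref{equation: dual problem} and any $\gamma\in\Pi(\mu_1,\dots,\mu_m)$, $\sum_{i=1}^m\int_{K_i}u_i\,d\mu_i=\int_{\KK}\sum_{i=1}^m u_i(x_i)\,d\gamma\le\int_{\KK}c\,d\gamma$, so the supremum in \eqref{equation: dual problem} is at most the infimum in \eqref{equation: kantorovich}. For the reverse inequality I would fix an optimal plan $\gamma$ given by (i) and set $\Gamma:=\spt(\gamma)$. A standard rearrangement argument shows that $\Gamma$ is $c$-cyclically monotone in the sense of \cref{definition: cyclical monotonicity}: if some finite configuration $\{\xx^1,\dots,\xx^N\}\subset\Gamma$ and some $m$-tuple of permutations violated the cyclical inequality, then, using continuity of $c$ together with a measurable gluing of the conditional measures of $\gamma$ over small neighbourhoods of the $\xx^j$, one could build an admissible plan of strictly smaller cost, contradicting optimality.

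From the $c$-cyclical monotonicity of $\Gamma$ I would then construct a $c$-conjugate $m$-tuple by the Rüschendorf-type procedure: fix $\bar{\xx}\in\Gamma$, define $u_1$ on $K_1$ as an infimum of alternating sums of values of $c$ along finite $c$-monotone chains in $\Gamma$ based at $\bar{\xx}$, and obtain $u_2,\dots,u_m$ by successive $c$-conjugation (with $U$ a fixed compact set containing $K_1\cup\cdots\cup K_m$, the $u_i$ being set to $-\infty$ off $K_i$). Boundedness and uniform continuity of $c$ on the compact set $\KK$ ensure that all these infima are finite, that the $u_i$ are bounded Borel functions (hence in $L^1_{\mu_i}(K_i)$), that $\sum_{i=1}^m u_i(x_i)\le c(\xx)$ everywhere and that equality holds on $\Gamma$, i.e. $\spt(\gamma)\subset\Gamma_\uu$. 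Then
\[
\sum_{i=1}^m\int u_i\,d\mu_i=\int_{\KK}\sum_{i=1}^m u_i(x_i)\,d\gamma=\int_{\KK}c\,d\gamma=\inf\,\eqref{equation: kantorovich},
\]
which together with weak duality proves (iii) and shows $(u_1,\dots,u_m)$ is optimal for \eqref{equation: dual problem}, giving (ii). For (iv), let $\gamma$ be any optimal plan and $\uu$ any $c$-conjugate solution of \eqref{equation: dual problem}: the function $c-\sum_{i=1}^m u_i$ is nonnegative and lower semicontinuous (each $u_i$, as an infimum of continuous functions, is upper semicontinuous) and, by (iii), has vanishing $\gamma$-integral; hence it is zero $\gamma$-a.e., and since its zero set is closed it contains $\spt(\gamma)$, i.e. $\spt(\gamma)\subset\Gamma_\uu$.

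The only genuinely technical point is the passage from optimality of $\gamma$ to $c$-cyclical monotonicity of $\spt(\gamma)$ in the multimarginal case --- the bookkeeping with $m$ independent permutations and the measurable gluing of conditional measures --- together with the verification that the Rüschendorf construction produces finite, bounded, Borel potentials. Both steps are routine given the compact supports and continuity of $c$, and are carried out in detail in \cite{Ambrosio2013}.
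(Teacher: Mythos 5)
Your overall architecture (direct method for (i), weak duality, then production of a dual maximizer that saturates the inequality on the support of an optimal plan, then (iv) via lower semicontinuity of $c-\sum_i u_i$) is the classical one the paper alludes to when it cites \cite{Ambrosio2013}, and parts (i), the weak duality inequality, and part (iv) are handled correctly. The problem is the step you label as routine: the ``R\"uschendorf-type procedure'' that turns the $c$-cyclically monotone set $\Gamma=\spt(\gamma)$ into a $c$-conjugate $m$-tuple $(u_1,\dots,u_m)$ with $\sum_i u_i\le c$ everywhere and equality on $\Gamma$. The alternating-sum chain construction is a genuinely two-marginal device: it exploits the fact that a chain $(x_0,y_0),(x_1,y_0),(x_1,y_1),\dots$ alternates between exactly two coordinates, so the telescoping sum closes up into a single potential $u_1$ whose $c$-transform is automatically the second potential. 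For $m\ge 3$ there is no analogous telescoping: after you produce $u_1$, the natural conjugate object is a function $v(x_2,\dots,x_m)=\inf_{x_1}\{c(\xx)-u_1(x_1)\}$ of the remaining $m-1$ variables jointly, and there is no canonical (and in general no possible) way to split $v$ as $u_2(x_2)+\dots+u_m(x_m)$; ``successive $c$-conjugation'' does not resolve this, because each conjugation needs the other $m-2$ potentials as input. Indeed, whether a $c$-cyclically monotone set in the sense of \cref{definition: cyclical monotonicity} is always contained in some $\Gamma_\uu$ is a delicate question for $m\ge 3$ (even the sufficiency of $c$-monotonicity for optimality required a separate, non-constructive argument in the multimarginal literature), so this step cannot be dismissed as bookkeeping, and it is not carried out in \cite{Ambrosio2013}, which treats $m=2$; the paper itself notes the multimarginal statement ``is not present in literature at this level of generality.''

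The standard correct route in the compact, continuous setting bypasses your step (4) entirely: prove the duality identity (iii) first by a Fenchel--Rockafellar/minimax argument on $C(\KK)$ (or by the two-marginal proof applied after grouping, plus an approximation), and obtain (ii) by taking a maximizing sequence for \eqref{equation: dual problem}, replacing it by its successive $c$-conjugates --- which never decreases the dual value and forces each $u_i^k$ to inherit the modulus of continuity of $c$ on $\KK$ --- normalizing the additive constants, and applying Arzel\`a--Ascoli; the limit is a $c$-conjugate maximizer. With (ii) and (iii) in hand, your argument for (iv) goes through verbatim and in fact supersedes the cyclical-monotonicity detour, which is then only needed (as in the paper) for the structural results downstream, not for the duality theorem itself.
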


\begin{remark}
If $\uu$ is a $c$-conjugate $m$-tuple which maximizes~\eqref{equation: dual problem}, we can assume
	\begin{equation}
	u_i(x_i)=\inf\brcs*{c(x_1,\dots,x_m)-\sum_{j\neq i} u_j(x_j)\stset \text{$x_j\in K_j$ for any $j\neq i$}}.
	\end{equation}
for $\mu_i$-almost every $x_i\in\heis^n$.
\end{remark}

\section{Cost functions associated to distances}
\label{section: cost functions}
We now specify what kind of cost functions we are interested in: namely, we adapt the idea of ``cost functions which grow quadratically with the distance'' to the multimarginal problem. The same problem was approached in \cite{Gangbo1998} in the Euclidean space $\R^n$ with the standard distance: here the cost $c(\xx)=\sum_{i\neq j} \abs{x_i-x_j}^2$ was considered. When moving to the Riemannian case (see \cite{Kim2015}), a natural way to generalize this is to take the ``barycentric cost'': for $x_i,y\in M$, with $(M,g)$ Riemannian manifold and $d$ distance associated to the metric $g$, one takes the cost
	\begin{equation}
	c(\xx)=\inf_{y} \sum_i d^2 (x_i,y),
	\end{equation}
which in $\R^n$ is actually equivalent to the Gangbo-\'{S}wiȩch one.
The same choice is made here for the Heisenberg group. 

\begin{definition}[Barycentric cost associated to a distance]\label{definition: distance cost function}
For a given distance $d$ on $\heis^n$, we define the cost function 
$c_d:(\heis^n)^m \to \R$
associated to $d$ as the function
	\begin{equation}\label{equation: distance cost function}
		c_d(x_1,\dots,x_m)\doteq \inf_{y\in\heis^n}\brcs*{ \sum_{i=1}^m d^2(x_i, y) }.
	\end{equation}
We'll drop the subscript $d$ when the metric is clear from the context. 
\end{definition}

The infimum in \eqref{equation: distance cost function} is actually a minimum, and the minimum points will play a fundamental role in our work:

\begin{definition}[Barycenters]
We'll say that $y\in\heis^n$ is a \textbf{barycenter} for the $m$-tuple $(x_1,\dots,x_m)$ if $y$ realizes the infimum in \cref{equation: distance cost function}. We denote by $\mathbf{b}:(\heis^n)^m\to\msP(\heis^n)$ the multivalued map which associates an $m$-tuple with its (never empty) set of barycenters. Here $\msP(\heis^n)$ is the power set of $\heis^n$.
\end{definition}

\begin{remark}\label{remark: inf on K}
For any compact set $K\subset(\heis^n)^m$ we can find a larger compact set $K'$ which contains all the barycenters of $m$-tuples $(x_1,\dots, x_m)\in K$: indeed, $c_d$ is upper semi-continuous, thus bounded from above by a value $M>0$ in $K$; hence the set
	\begin{equation}
		H\doteq \bigcup_{\xx\in K}\brcs*{y\in\heis^n \stset \sum_{i=1}^m d^2(x_i,y)\leq M}
	\end{equation}
	contains all the barycenters and is easily seen to be bounded; it suffices to take $K'=\bar{H}$.	
	In particular, if $(x_1,\dots, x_m)\in K^m$, then
	\begin{equation}
		c_d(x_1,\dots,x_m)\doteq \inf_{y\in K'}\brcs*{ \sum_{i=1}^m d^2(x_i, y) }.
	\end{equation}
Notice that with the same argument one can easily prove that the cost function $c_d$ is actually continuous.
\end{remark}

In the following \namecref{lemma: swap}, we show that if $\uu$ is $c_d$-conjugate and $y$ is a barycenter for two different $m$-tuples $\xx, \bar{\xx}$ of $\Gamma_\uu$, then it is also a barycenter for the $m$-tuples obtained by swapping the $i^{\mathrm{th}}$-components of $\xx$ and $\bar{\xx}$. 
\begin{lemma}\label{lemma: swap}
Let:
	\begin{itemize}
	\item $c=c_d:(\heis^n)^m\to\R$ be a cost function associated to a distance $d$ as in \cref{definition: distance cost function};
	\item $\uu=(u_1,\dots,u_m):\heis^n\to\R^m$ be a $c_d$-conjugate $m$-tuple in $U\subset \heis^n$;
	\item $\xx, \bar{\xx}$ belong to the set $\Gamma_\uu$ defined in \cref{equation: graph of superdifferential}.
	\end{itemize}		
	Assume the point $y\in\heis^n$ is a barycenter for both $\xx$ and $\bar{\xx}$. Then $y$ is also a barycenter for the $m$-tuple $\xx'\doteq (\bar{x}_1,x_2,\dots,x_m)$ and for the $m$-tuple $\bar{\xx}'\doteq (x_1,\bar{x}_2,\dots,\bar{x}_m)$.
\end{lemma}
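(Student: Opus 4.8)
The plan is to sandwich the quantity $c(\xx') + c(\bar\xx')$ between two bounds, one coming from the fact that $c$ is defined as an infimum and one coming from the $c$-conjugacy of $\uu$, and then observe that the two bounds coincide, which forces equality in each of the two ``$y$ is a competitor'' inequalities separately.

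First I would write down the two elementary upper bounds that follow directly from \cref{definition: distance cost function}: since $y$ is admissible in the infimum defining $c$,
\[
c(\xx') \;\le\; \sum_{i=1}^m d^2(x'_i, y) \qquad\text{and}\qquad c(\bar\xx') \;\le\; \sum_{i=1}^m d^2(\bar x'_i, y),
\]
where $\xx' = (\bar x_1, x_2,\dots,x_m)$ and $\bar\xx' = (x_1,\bar x_2,\dots,\bar x_m)$. Adding these and regrouping the $2m$ summands on the right, the swapped first coordinates recombine, so the right-hand side equals $\sum_i d^2(x_i,y) + \sum_i d^2(\bar x_i,y)$; since $y$ is a barycenter for $\xx$ and for $\bar\xx$, this is exactly $c(\xx) + c(\bar\xx)$. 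Hence $c(\xx') + c(\bar\xx') \le c(\xx) + c(\bar\xx)$.

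Next I would prove the reverse inequality from $c$-conjugacy. Because $\xx, \bar\xx \in \Gamma_\uu \subset U^m$, the entries $x_2,\dots,x_m$ and $\bar x_2,\dots,\bar x_m$ all lie in $U$, so testing the infimum defining $u_1$ at the point $\bar x_1$ with companions $x_2,\dots,x_m$ gives $u_1(\bar x_1) + \sum_{j\ge 2} u_j(x_j) \le c(\xx')$, and symmetrically $u_1(x_1) + \sum_{j\ge 2} u_j(\bar x_j) \le c(\bar\xx')$. Summing these and using $\sum_i u_i(x_i) = c(\xx)$ and $\sum_i u_i(\bar x_i) = c(\bar\xx)$, which is precisely what $\xx,\bar\xx\in\Gamma_\uu$ means, yields $c(\xx)+c(\bar\xx) \le c(\xx')+c(\bar\xx')$. (One may phrase this step instead as the $c$-cyclical monotonicity of $\Gamma_\uu$ applied with $N=2$, the two points $\xx,\bar\xx$, and the permutations $\sigma_1$ the transposition, $\sigma_2 = \dots = \sigma_m = \Id$.)

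Combining the two inequalities gives $c(\xx') + c(\bar\xx') = \sum_i d^2(x'_i,y) + \sum_i d^2(\bar x'_i,y)$ while each summand on the right is, by the first display, individually no smaller than the corresponding cost; hence both inequalities of the first display must be equalities. This says exactly that $y$ realizes the infimum defining $c(\xx')$ and the one defining $c(\bar\xx')$, i.e.\ $y$ is a barycenter for $\xx'$ and for $\bar\xx'$. I do not expect any genuine obstacle here; the only point requiring a little care is the bookkeeping showing that swapping a single coordinate leaves the sum of the two barycentric sums evaluated at $y$ unchanged, so that ``$y$ is a barycenter for both originals'' is precisely the input that upgrades the two a priori inequalities into equalities.
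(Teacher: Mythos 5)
Your proof is correct and follows essentially the same route as the paper's: one inequality from $c$-cyclical monotonicity of $\Gamma_\uu$ (equivalently, testing the conjugacy infimum at the swapped points), the other from $y$ being an admissible competitor in the infimum defining $c(\xx')$ and $c(\bar\xx')$ combined with the barycenter property of $y$ for $\xx$ and $\bar\xx$, and then forcing equality throughout to split the sum into the two individual equalities. The paper merely writes the two bounds as a single chain of inequalities beginning and ending at $c(\xx)+c(\bar\xx)$; the content is identical.
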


\begin{proof}
By $c$-cyclical monotonicity, 
	\begin{equation}
	\begin{split}
	c(\xx)+c(\bar{\xx})&\leq c(\xx')+c(\bar{\xx}')\\
	&\leq \dc^2(\bar{x}_1,y)+\sum_{i=2}^m \dc^2(x_i,y)+\dc^2(x_1,y)+\sum_{i=2}^m \dc^2(\bar{x}_i,y)\\
	&= \sum_{i=1}^m \dc^2(x_i,y) + \sum_{i=1}^m \dc^2(\bar{x}_i,y)\\
	&= c(\xx)+c(\bar{\xx}).
	\end{split}
	\end{equation}
We must therefore have equality throughout the above string of inequalities, meaning that 
	\begin{equation}\label{eqn: optimality for mixed points}
	c(\xx') = \dc^2(\bar{x}_1,y) +\sum_{i=2}^m \dc^2(x_i,y)\text{ and }c(\bar{\xx}') = \dc^2(x_1,y) +\sum_{i=2}^m \dc^2(\bar{x}_i,y),
	\end{equation}
as desired.
\end{proof}

Our main result concerns the cost function associated to a special distance on $\heis^n$, \ie{} the subriemannian Carnot-Carathéodory distance, which we introduce here:
\begin{definition}[Carnot-Carathéodory distance]\label{definition: CC distance}
We call a \emph{subunit curve} a Lipschitz curve $\gamma:[0,T]\longrightarrow\heis^n$ such that for a.e.\ $t\in[0,T]$,
	\begin{equation}
	\begin{gathered}
		\dot{\gamma}(t)=\sum_{j=1}^n a_j(t)X_j(\gamma(t))+b_j(t)Y_j(\gamma (t))
		\\
		\text{and }\sum_{j=1}^n a_j^2(t)+b_j^2(t) \leq 1
	\end{gathered}	
	\end{equation}
with $a_1,...,a_n,b_1,\dots,b_n$ measurable coefficients. The Carnot-Carathéodory distance between the points $x,y\in\heis^n$ is defined as
	\begin{equation}\label{equation: definition of dcc}
	d_c(x,y)\doteq \inf\brcs*{T\geq 0\stset
								\begin{gathered}
									\textrm{there exists a subunit curve $\gamma:[0,T]\rightarrow\heis^n$}\\
									\textrm{such that $\gamma(0)=x$ and $\gamma(T)=y$}
								\end{gathered}											
							}.
	\end{equation}
\end{definition}

\begin{remark}
The Carnot-Carathéodory distance on $\heis^n$ is an example of a \emph{left invariant} and \emph{homogeneous} metric, \ie{}, whenever $x,\bar{x}\in\heis^n$, $p\in\heis^n$ and $\lambda>0$, the following hold:
	\begin{equation}
	d_{c}(p\cdot x,p\cdot \bar{x})=d_c(x,\bar{x})\quad \text{and}\quad d_c(\delta_\lambda(x),\delta_\lambda(\bar{x}))=d(x,\bar{x}),
	\end{equation}
where $\delta_\lambda([z,t])=[\lambda z,\lambda^2 t]$ for any $[z,t]\in\C^n\times\R\simeq\heis^n$. It's easy to prove that any left invariant and homogeneous metric is equivalent to $d_c$ (see for example \cite[Corollary 5.1.5]{Bonfiglioli2007}). This also means that the notion of ``Lipschitz function'' does not depend on the particular metric chosen, as long as it is left invariant and homogeneous.
\end{remark}

We collect here some properties of the (squared) distance function, which are mostly proved in \cite{Ambrosio2004}; some of them will turn out to be useful later, the others are stated for the sake of completeness.
\begin{proposition}[Differentiability properties of $\dc^2$]
Let us denote, with an abuse of notation, $\dc(z)\doteq \dc(0,z)$ for any $z\in\heis^n$. Let $L=\brcs*{se_{2n+1}\stset s\in\R}$ be the vertical axis. Then:
	\begin{enumerate}[(a)]
	\item If $z\notin L$, then $\dc^2$ is differentiable at $z$.
	\item $X_j\dc^2(0)=0$ and $Y_j\dc^2(0)=0$; moreover, $Z^{\pm}\dc^2(0)=\pm\pi$.
	\item If $z=[0,\dots,0,t]\in L\setminus \brcs{0}$, then $\dc^2(z)=\pi \abs{t}$, thus, $Z\dc^2(z)= \mathrm{sgn}(t) \pi$; however, $\dc^2$ is not differentiable at $z$ along the directions $X_j,Y_j$ with $j=1,\dots,n$.
	\end{enumerate}
\end{proposition}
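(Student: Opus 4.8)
All three statements are, as the authors remark, essentially contained in the geodesic analysis of \cite{Ambrosio2004}; the plan is to reduce each of them to a small number of explicit facts. The first is the spherical parametrization $\Upsilon$ of \cref{proposition: parametrization}, together with the fact (also from \cite{Ambrosio2004}) that $r$ is exactly the length of the unique minimizing geodesic joining $0$ to $\Upsilon(a+ib,v,r)$, so that $\dc(\Upsilon(a+ib,v,r))=r$ on all of $\mathbf D$; a convenient by-product of the formulas defining $\Upsilon$ is the identity $\abs{z}=2r\abs{\sin(v/2)}/\abs{v}$, where $z$ denotes the horizontal part of $\Upsilon(a+ib,v,r)$. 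The second is the pair of elementary distance formulas $\dc([w,0])=\abs{w}$ for horizontal points and $\dc^2([0,t])=\pi\abs{t}$ for points of the vertical axis $L$; the latter is the classical computation of the Carnot--Carathéodory distance to a point of $L$, and can also be recovered from $\Upsilon$ by letting $v\to\pm2\pi$, which sends $\Upsilon(a+ib,v,r)$ to $[0,\pm r^2/\pi]$.

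Granting these, (a) is immediate: for $z\notin L$ write $z=\Upsilon(a+ib,v,r)$ with $(a+ib,v,r)=\Upsilon^{-1}(z)\in\mathbf D$; then $\dc^2(z)=\pths{r\circ\Upsilon^{-1}(z)}^2$, and since $\Upsilon^{-1}$ is $C^1$ on $\heis^n\setminus L$ by \cref{proposition: parametrization}, $\dc^2$ is (at least) $C^1$ there, in particular differentiable at $z$. For (b): the integral curve of $X_j$ through $0$ is $s\mapsto[se_j,0]$ — the term $2y_j\partial_t$ in $X_j$ vanishes along it — and along it $\dc^2=s^2$ by the horizontal distance formula, so $X_j\dc^2(0)=0$; likewise $Y_j\dc^2(0)=0$ using $s\mapsto[ise_j,0]$. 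The integral curve of $Z$ through $0$ is $s\mapsto[0,s]\subset L$, along which $\dc^2=\pi\abs{s}$, so $Z^\pm\dc^2(0)=\lim_{s\to0^\pm}\pi\abs{s}/s=\pm\pi$ (in particular $\dc^2$ is \emph{not} differentiable at $0$). The corresponding assertions in (c) at $z=[0,\dots,0,t_0]\in L\setminus\{0\}$ follow the same way: $\dc^2(z)=\pi\abs{t_0}$ by the axis formula, and moving along $Z$ through $z$ gives the two-sided derivative $Z\dc^2(z)=\tfrac{d}{ds}\big|_{s=0}\pi\abs{t_0+s}=\mathrm{sgn}(t_0)\,\pi$, since $t_0\neq0$.

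The one point genuinely requiring computation is the failure of differentiability of $\dc^2$ at $z=[0,\dots,0,t_0]$, $t_0\neq0$, along $X_j$ and $Y_j$. The integral curve of $X_j$ through $z$ is $s\mapsto[se_j,t_0]$, so I must show $g(s)\doteq\dc^2([se_j,t_0])$ is not differentiable at $s=0$; the $Y_j$ case, via $s\mapsto[ise_j,t_0]$, is identical. Note first that $[z,t]\mapsto[-z,t]$ is a group automorphism acting as $-\Id$ on the horizontal bundle, hence a $\dc$-isometry fixing $0$, so $g$ is even. The plan is then to read off the first-order behaviour of $g$ by inverting the relations defining $\Upsilon$ near the axis: for small $s\neq0$ one has $[se_j,t_0]=\Upsilon(a+ib,v,r)$ with $v=\mathrm{sgn}(t_0)(2\pi-\varepsilon)$, $\varepsilon\downarrow0$, and $r\to r_0\doteq\sqrt{\pi\abs{t_0}}$; the by-product identity gives $\abs{s}=\tfrac{r\varepsilon}{2\pi}(1+O(\varepsilon))$, while imposing $t=t_0$ in the third coordinate of $\Upsilon$ and expanding $v-\sin v=2\pi+O(\varepsilon^3)$, $v^2=4\pi^2-4\pi\varepsilon+O(\varepsilon^2)$ yields $r^2=\pi\abs{t_0}-\abs{t_0}\,\varepsilon+O(\varepsilon^2)$. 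Eliminating $\varepsilon$ gives
\[
g(s)=\dc^2([se_j,t_0])=\pi\abs{t_0}-2\sqrt{\pi\abs{t_0}}\,\abs{s}+o(\abs{s})\qquad(s\to0),
\]
whose right-hand side has a genuine corner at $s=0$; hence $\dc^2$ is not differentiable at $z$ along $X_j$, nor — symmetrically — along $Y_j$. The main obstacle is precisely this asymptotic inversion of $\Upsilon$ near $\abs{v}=2\pi$: it is the quantitative face of the failure of semiconcavity of $\dc^2$ along $L$ emphasized in the introduction, and everything else is routine bookkeeping with explicit integral curves and the cited parametrization.
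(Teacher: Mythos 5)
Your argument is correct, but it is worth noting that the paper does not actually prove this proposition: it is stated ``for the sake of completeness'' with all items deferred to \cite{Ambrosio2004} (their Section 3 and Lemma 3.11 in particular). What you have done is reconstruct those facts from the spherical parametrization $\Upsilon$ of \cref{proposition: parametrization}, and the reconstruction checks out: the identity $\abs{z}=2r\abs{\sin(v/2)}/\abs{v}$ follows from $(1-\cos v)^2+\sin^2v=4\sin^2(v/2)$ and $\abs{a+ib}=1$; the expansion $r^2=\pi\abs{t_0}-\abs{t_0}\varepsilon+O(\varepsilon^2)$ and the elimination of $\varepsilon$ giving $\dc^2([se_j,t_0])=\pi\abs{t_0}-2\sqrt{\pi\abs{t_0}}\,\abs{s}+o(\abs{s})$ are both correct, and the nonzero coefficient of $\abs{s}$ does exhibit the corner. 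Your route buys something the citation does not: an explicit first-order expansion quantifying the failure of differentiability (and of semiconcavity) of $\dc^2$ along $L$, which is exactly the phenomenon the introduction blames for the extra hypothesis \eqref{equation:condition_C2}. Two small points you should make explicit rather than leave implicit. First, the fact that $\dc(\Upsilon(a+ib,v,r))=r$ on all of $\mathbf{D}$ is not contained in the statement of \cref{proposition: parametrization} as quoted in this paper; it is a separate input from \cite{Ambrosio2004} (the curves $s\mapsto\Upsilon(a+ib,vs/r,s)$ are the unit-speed minimizing geodesics), and you correctly flag it but it deserves a precise citation. Second, the claim that $\Upsilon^{-1}([se_j,t_0])$ has $v\to\mathrm{sgn}(t_0)\,2\pi$ as $s\to0$ needs a one-line justification: if $v$ stayed away from $\pm2\pi$ then $\abs{s}=2r\abs{\sin(v/2)}/\abs{v}\to0$ would force $r\to0$ (the ratio being bounded below on such a region, including near $v=0$), hence $t\to0\neq t_0$; so $v\to\pm2\pi$ is the only possibility and the sign is fixed by the sign of $t$. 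With those two additions the proof is complete.
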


We now introduce the notion of $d^2$-concavity, which plays a key role in the two-marginals optimal transport theory, and is closely related to the (multi-marginal) notion of $c_d$-conjugacy (see \cref{lemma: dconcave}).

\begin{definition}
Let $u:\heis^n\to \R$. Let $d$ be a metric on $\heis^n$. We say that $u$ is $d^2$-concave if
	\begin{equation}\label{equation: dconcave}
	u(x)=\inf_{y\in U}\brcs*{d^2(x,y)-\phi(y)}\qquad\forall x\in\heis^n
	\end{equation}
for some non-empty set $U\in\heis^n$ and $\phi: U \to \R\cup \brcs{-\infty}$, $\phi \not\equiv -\infty$ (see \cite[Definition 4.1]{Ambrosio2004}).
\end{definition}

\begin{remark}
If $u$ is $d^2$-concave, then \cref{equation: dconcave} also holds with $U=\heis^n$, up to defining $\phi\equiv -\infty$ on $\heis^n\setminus U$. 
\end{remark}

\begin{definition}
Let $u:\heis^n\to \R$ be a function, and let $d$ be a metric on $\heis^n$. We define the $d^2$-superdifferential of $u$ at $x\in\heis^n$ as
	\begin{equation}
	\partial_{d^2}u(x)\doteq \brcs*{y\in\heis^n \stset d^2(x,y)-u(x)\leq d^2(z,y)-u(z)\,\, \forall z\in\heis^n}.
	\end{equation}
We denote by $\partial_{d^2}u$ the graph of the $d^2$-superdifferential:
	\begin{equation}
	\partial_{d^2}u \doteq \brcs*{(x,y)\in\heis^n\times\heis^n\stset y\in\partial_{d^2}u(x)}.
	\end{equation}
\end{definition}

\begin{remark}
If $u$ is $d^2$-concave, then $\partial_{d^2}u$ coincides with the set
	\begin{equation}
	\brcs*{(x,y)\in\heis^n\times\heis^n\stset u(x)+\phi(y)=d^2(x,y)}.
	\end{equation}
\end{remark}

In the following \namecref{lemma: dconcave}, we specify the relation between $c_d$-conjugacy and $d^2$-concavity: in particular, we show that each component of a $c_d$-conjugate solution $\uu$ to the dual problem \eqref{equation: dual problem} is $d^2$-concave.

\begin{lemma}\label{lemma: dconcave}
Let $c_d:(\heis^n)^m\to\R$ be a cost function associated to a homogeneous and left invariant distance $d$, let $\mu_1,\dots,\mu_m$ be compactly supported, absolutely continuous probability measures on $\heis^n$ and let $(u_1,\dots,u_m):\heis^n\to\R^m$ be a $c_d$-conjugate solution to the dual problem~\eqref{equation: dual problem}. Then each $u_j$ is $d^2$-concave. Moreover, if $\bar{\xx}\in\Gamma_\uu$ and $y\in\heis^n$ is a barycenter for $\bar{\xx}$, then $y\in \partial_{d^2}u_i(\bar{x}_i)$ for all $i\in\brcs{1,\dots,m}$. 
\end{lemma}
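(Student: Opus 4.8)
The plan is to unwind the definitions: $c_d$-conjugacy is a minimization over $(m-1)$-tuples of the other variables, while $d^2$-concavity is a minimization of $d^2(x,\cdot) - \phi(\cdot)$ over a single auxiliary variable $y$. The bridge between the two is precisely the barycenter: given a $c_d$-conjugate $\uu$ with auxiliary set $U$, the cost $c_d$ hides a further infimum over $y\in\heis^n$, and by \cref{remark: inf on K} this infimum is attained and can be taken over a compact set. So for each fixed $i$ and each $x_i$, expanding $c_d(x_1,\dots,x_m) = \inf_y \sum_k d^2(x_k,y)$ inside the definition of $u_i$, I would interchange the two infima (over $y$ and over the $x_j$, $j\neq i$, ranging in $U$) to write
\begin{equation}
u_i(x_i) = \inf_{y\in\heis^n}\brcs*{d^2(x_i,y) + \inf\brcs*{\sum_{j\neq i}\pths*{d^2(x_j,y) - u_j(x_j)} \stset x_j\in U}}.
\end{equation}
Defining $\phi_i(y)$ to be minus the inner infimum, this is exactly the form \eqref{equation: dconcave}, so each $u_i$ is $d^2$-concave (one checks $\phi_i\not\equiv-\infty$ using that the $u_j$ are real-valued and the relevant infima are over compact sets, finite by continuity of $\dc^2$, via \cref{remark: inf on K}). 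The interchange of infima is routine but is the one place to be slightly careful: it is just the identity $\inf_{a}\inf_{b} F(a,b) = \inf_{b}\inf_a F(a,b)$, valid with no hypotheses.

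For the second assertion, suppose $\bar\xx\in\Gamma_\uu$ and $y$ is a barycenter for $\bar\xx$. By definition of $\Gamma_\uu$ we have $\sum_k u_k(\bar x_k) = c_d(\bar\xx) = \sum_k d^2(\bar x_k, y)$, the last equality because $y$ is a barycenter. Fix $i$. I want to show $y\in\partial_{d^2}u_i(\bar x_i)$, i.e.\ $d^2(\bar x_i, y) - u_i(\bar x_i) \le d^2(z,y) - u_i(z)$ for all $z\in\heis^n$. From the $d^2$-concavity representation just established, $u_i(\bar x_i) = \inf_{y'}\{d^2(\bar x_i, y') - \phi_i(y')\} \le d^2(\bar x_i, y) - \phi_i(y)$, and for any $z$, $u_i(z) \le d^2(z,y) - \phi_i(y)$ as well. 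To get the reverse-type inequality at $\bar x_i$ I need $u_i(\bar x_i) = d^2(\bar x_i, y) - \phi_i(y)$, i.e.\ that $y$ actually attains the infimum defining $u_i(\bar x_i)$; this follows by combining $\sum_k u_k(\bar x_k) = \sum_k d^2(\bar x_k, y)$ with the pointwise bounds $u_k(\bar x_k) \le d^2(\bar x_k, y) - (\text{the }k\text{-term})$ — since the sum is tight, each term must be tight, which pins down $-\phi_i(y) $ and also shows the $\bar x_j$, $j\ne i$, realize the inner infimum defining $\phi_i(y)$. Once $u_i(\bar x_i) = d^2(\bar x_i,y) - \phi_i(y)$ is known, subtracting $u_i(z) \le d^2(z,y)-\phi_i(y)$ gives exactly the superdifferential inequality.

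I expect the main (mild) obstacle to be bookkeeping with the auxiliary set $U$ versus the compact supports $K_j$ and the enlarged compact $K'$ of \cref{remark: inf on K}: one must make sure the same $U$ works in the $c_d$-conjugacy of $\uu$ and in the resulting $d^2$-concavity of each $u_i$, and that all the infima in play are over sets on which $\dc^2$ is bounded so that no $+\infty$ or $-\infty$ sneaks in to invalidate $\phi_i\not\equiv-\infty$. There is no deep analytic content here — no semiconcavity, no differentiability — it is purely a manipulation of nested infima together with the tightness-of-the-sum argument, so the proof should be short.
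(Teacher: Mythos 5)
Your proposal is correct and follows essentially the same route as the paper: the first part is the identical exchange of the two nested infima (over the barycenter $y$ and over the $x_j$, $j\neq i$), and the second part is the same tightness argument, merely phrased through the potential $\phi_i$ from the $d^2$-concavity representation rather than through the intermediate bound $c_d(z,\bar{x}_2,\dots,\bar{x}_m)\leq d^2(z,y)+\sum_{j>1}d^2(\bar{x}_j,y)$ used in the paper. The bookkeeping point you flag (that $\bar{x}_j\in U$ so the tuple is admissible in the inner infimum, and that all infima run over compact sets so $\phi_i\not\equiv-\infty$) is the only care needed, and you handle it correctly.
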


\begin{proof}
Notice that for all $x_i\in\heis^n$ it holds:
	\begin{equation}
	u_i(x_i)=\inf_{x_j\in K_j, j\neq i}\brcs*{\inf_{y\in K'} \brcs*{\sum_{j=1}^m d^2(x_j,y)} - \sum_{j\neq i}u_j (x_j)},
	\end{equation}
where $K_j$ is defined in \cref{equation bigK} and $K'$ is a suitable compact set (see \cref{remark: inf on K}); thus
	\begin{equation}
	u_i(x_i)=\inf_{y\in K'}\brcs*{d^2(x_i,y)+\inf_{x_j\in K_j, j\neq i}\brcs*{\sum_{j\neq i}(d^2(x_j,y)-u_j (x_j))}}.
	\end{equation}
This proves the first statement. Let now $\bar{\xx}\in\Gamma_\uu$, and $y$ be a barycenter. Then, taking for example $i=1$, we have
	\begin{equation}
	d^2(\bar{x}_1,y)-u_1(\bar{x}_1)=\sum_{j>1}(u_j(\bar{x}_j)-d^2(\bar{x}_j,y));
	\end{equation}
since $\uu$ is $c_d$-conjugate, we then have for any $z\in\heis^n$
	\begin{equation}
	d^2(\bar{x}_1,y)-u_1(\bar{x}_1)\leq c_d(z,\bar{x}_2,\dots, \bar{x}_m)-\sum_{j>1}d^2(\bar{x}_j,y)-u_1(z)\leq d^2(z,y)-u_1(z),
	\end{equation}
	which is what we needed to prove $y\in\partial_{d^2}u_1(\bar{x}_1)$.
\end{proof}

\begin{corollary}\label{corollary: pansu_diff}
Let $c:(\heis^n)^m\to\R$ be the cost function associated to the Carnot-Carathéodory distance $\dc$; let $\mu_1,\dots,\mu_m$ be compactly supported, absolutely continuous probability measures on $\heis^n$ and let $(u_1,\dots,u_m):\heis^n\to\R^m$ be a $c$-conjugate solution to the dual problem~\eqref{equation: dual problem}. Then each $u_j$ is Lipschitz, and thus Pansu differentiable almost everywhere; moreover, $Z u_j$ also exists almost everywhere in $\heis^n$.
\end{corollary}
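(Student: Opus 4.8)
The plan is to read everything off \cref{lemma: dconcave}. By that lemma each component $u_j$ of the maximizer is $\dc^2$-concave, and — combining its proof with \cref{remark: inf on K} — it admits the representation
\begin{equation}
	u_j(x)=\inf_{y\in K'}\brcs*{\dc^2(x,y)-\phi_j(y)},\qquad x\in\heis^n,
\end{equation}
for a fixed compact set $K'\subset\heis^n$ and some $\phi_j\colon K'\to\R\cup\brcs{-\infty}$ with $\phi_j\not\equiv-\infty$; from now on only this formula is used.

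For the Lipschitz assertion I would argue as follows: since $\dc(\cdot,y)$ is $1$-Lipschitz with respect to $\dc$, on every ball $B$ the functions $\dc^2(\cdot,y)=\pths{\dc(\cdot,y)}^2$ are Lipschitz (with respect to $\dc$) with one and the same constant, depending only on $B$ and $K'$, as $y$ ranges over $K'$; hence their infimum $u_j$ is Lipschitz on $B$. Thus $u_j$ is locally Lipschitz on $\heis^n$ (in particular Lipschitz on the compact set $K_j=\spt\mu_j$), and the Pansu--Rademacher theorem (see \cite{Ambrosio2004} and the references therein) yields Pansu differentiability of $u_j$ at $\leb^{2n+1}$-a.e.\ point.

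The existence of $Zu_j$ a.e.\ is the point at which the specific geometry of $\heis^n$ — and, crucially, the fact that we took the \emph{square} of the distance — enters: for a generic $\dc$-Lipschitz function no vertical derivative need exist (for instance $u(z,t)=f(t)$ with $f$ nowhere differentiable and $\tfrac12$-H\"older is $\dc$-Lipschitz but has $Zu$ nowhere), whereas $\dc^2$, unlike $\dc$ itself, is locally Lipschitz with respect to the Euclidean metric on $\heis^n$ — a property which follows from the description of $\dc^2$ in \cite{Ambrosio2004} and is consistent with the values $Z^{\pm}\dc^2(0)=\pm\pi$ and $Z\dc^2=\pm\pi$ on $L\setminus\brcs{0}$ recalled above. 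Granting this, fix a horizontal slice $z$; since $Z=\partial/\partial t$ in our coordinates and, by left invariance, $\tau\mapsto y^{-1}\cdot(z,\tau)$ is a Euclidean isometry of $\R$ onto a vertical line, the map $\tau\mapsto\dc^2\pths{(z,\tau),y}=\dc^2\pths{y^{-1}\cdot(z,\tau)}$ is locally Lipschitz in $\tau$, with a constant uniform in $y\in K'$; hence $\tau\mapsto u_j(z,\tau)$ is locally Lipschitz, so differentiable at $\leb^1$-a.e.\ $\tau$, and by Fubini $Zu_j=\partial_t u_j$ exists at $\leb^{2n+1}$-a.e.\ point of $\heis^n$. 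The main obstacle is precisely this Euclidean-Lipschitz regularity of $\dc^2$ near the axis $L$: it is not contained in the differentiability statement as quoted, which controls $Z\dc^2$ only on $L$, and must be either established directly or imported from \cite{Ambrosio2004}; once available, the rest is routine.
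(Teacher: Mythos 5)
Your argument is correct and follows essentially the same route as the paper: the paper's proof is a bare citation of Lemmas 4.5 and 4.6 of \cite{Ambrosio2004} applied to the $\dc^2$-concave potentials furnished by \cref{lemma: dconcave}, and what you have written is precisely a reconstruction of those two lemmas, with the one ingredient you correctly isolate as needing to be imported (the locally uniform Lipschitz bound for $\tau\mapsto \dc^2(x\cdot[0,\tau],y)$, which follows from the boundedness of $Z\dc^2$ off $L$ together with $\dc^2([0,t])=\pi\abs{t}$ on $L$) being exactly the technical content of Lemma 4.6 there. One caveat concerning your parenthetical aside only: $u(z,t)=f(t)$ with $f$ merely $\tfrac12$-H\"older is \emph{not} $\dc$-Lipschitz away from the vertical axis (for $z\neq 0$ take $x=[z,0]$ and $x'=[z+\varepsilon w,\,2\varepsilon \ImP\ango{z,\bar w}]$, so that $\dc(x,x')=\varepsilon\abs{w}$ while the $t$-coordinates differ by an amount of order $\varepsilon$, making the H\"older quotient blow up), though nothing in your actual proof relies on this example.
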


\begin{proof}
This is a consequence of \cite[Lemma 4.5 and Lemma 4.6]{Ambrosio2004}.
\end{proof}

\begin{remark}
We refer to \cite[Paragraph 2.2]{Ambrosio2004} for a definition of Pansu differentiability and related issues, such as the Rademacher-type Theorem which is needed for \cref{corollary: pansu_diff}. What is important to us is that each $u_j$ is differentiable along $X_1,\dots,X_n$, $Y_1,\dots,Y_n$ and $Z$ for a.e.~$x\in\heis^n$, in the following sense: for any $k=1,\dots,2n+1$, the map $s\mapsto u_j(x\cdot \delta_s(e_k))$ is differentiable at $s=0$.
\end{remark}

Now fix $\dc$ to be the Carnot-Carathéodory distance, and denote by $c$ the associated cost function. Let $\uu=(u_1,\dots,u_m)$ be a $c$-conjugate solution to the dual problem~\eqref{equation: dual problem}, and let now $E_i\subset\heis^n$ be the ($\leb^{2n+1}$-negligible) set where $u_i$ fails to be differentiable along some of the directions $X_j,Y_j,Z$. Define the set 
	\begin{equation}\label{equation: Omega}
	\Omega_\uu\doteq \prod_{i=1}^m(\heis^n\setminus E_i).
	\end{equation}
Then, if $\mu_i \ll \leb^{2n+1}$ for all $i=1,\dots,m$ and $\mu\in\Pi(\mu_1,\dots,\mu_m)$ is a transport plan, one has:
	\begin{equation}
	\mu((\heis^n)^m\setminus\Omega_\uu)\leq \sum_{i=1}^m\mu\pths*{\heis^n\times\dots\times E_i \times\dots\times\heis^n}=\mu_i(E_i)=0,
	\end{equation}
thus $\Omega_\uu$ has full $\mu$-measure in $(\heis^n)^m$.

\section{Main results}

In the following definition, we introduce a condition which will turn out to be useful in our main result: we assume that in a suitable subset of $\heis^n$ the barycenter map can be inverted.
\begin{definition}
Let $c_d:(\heis^n)^m\to\R$ be a cost function associated to a distance $d$, and let $\Gamma \subset (\heis^n)^m$. We say that $\Gamma$ satisfies the condition \eqref{equation:condition_C1} if:
	\begin{equation}\label{equation:condition_C1}\tag{C1}
	\parbox{.8\textwidth}{\centering \itshape
	there exists $\Gamma_1 \subset \Gamma$ such that $\leb^{2n+1}(\pi_1(\Gamma\setminus \Gamma_1))=0$ and the restricted barycenter map $\restr{\mathbf{b}}{\Gamma_1}$ is injective.	
	}
	\end{equation}
Equivalently,
	\begin{equation}\tag{\ref{equation:condition_C1}'}
	\parbox{.8\textwidth}{\centering \itshape
	there exists $\Gamma_1 \subset \Gamma$ such that $\leb^{2n+1}(\pi_1(\Gamma\setminus \Gamma_1))=0$, and whenever $\xx$, $\bar{\xx}\in\Gamma_1$ have a common barycenter $y\in\heis^n$, $\xx$ and $\bar{\xx}$ must coincide.
	}
	\end{equation}
\end{definition}

We are now ready to prove a general Monge-type \namecref{theorem: monge} under the assumption \eqref{equation:condition_C1} on the invertibility of the barycenter map. We will then state a condition on the marginals which ensures \eqref{equation:condition_C1}.

\begin{theorem}\label{theorem: monge}
Let $\mu_1,\dots,\mu_m$ be compactly supported, absolutely continuous probability measures on $\heis^n$, and let $c$ be the cost associated to $\dc$. Let $\uu=(u_1,\dots,u_m)$ be a $c$-conjugate solution to the dual problem~\eqref{equation: dual problem}. Assume that the set $\Gamma_\uu$ defined in \eqref{equation: graph of superdifferential} satisfies the condition \eqref{equation:condition_C1} for some subset $\Gamma_1\subset \Gamma_\uu$. Then:
\begin{enumerate}[(i)]
	\item Any optimal transport plan $\gamma$ which achieves the minimum in \eqref{equation: kantorovich} is induced by a transport map $\Psi$ over the first variable $x_1$.
	\item The optimal transport plan $\gamma$ and map $\Psi$ are both unique.
	\item Let $\exp_\heis$ denote the subriemannian exponential map defined in \cref{definition: exponential}; let 
			\begin{equation}
			\mathbf{b}_{\Gamma_1}^{-1}: \mathbf{b}(\Gamma_1)\to \Gamma_1 \subset (\heis^n)^m
			\end{equation}	
	denote the inverse of the barycenter map, and let ${\mathbf{f}}: \mathbf{b}(\Gamma_1)\to(\heis^n)^{m-1}$ be defined by ${\mathbf{f}}_i=(\mathbf{b}_{\Gamma_1}^{-1})_i$ for $i=2,\dots,m$. Then $\Psi$ can be represented ($\mu_1$-a.e.) as 
		\begin{equation}\label{eqn: optimal maps structure}
		\Psi(x_1)={\mathbf{f}}\circ \psi \, (x_1),
		\end{equation}
	where
		\begin{equation}
		\psi(x_1)={x_1 \cdot \exp_\heis (-X u_1 (x_1)-iY u_1(x_1), -Z u_1(x_1))}
		\end{equation}
\end{enumerate}
\end{theorem}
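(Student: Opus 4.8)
The plan is to follow the classical Kim--Pass strategy for multi-marginal problems, adapting it to the subriemannian setting via Ambrosio--Rigot's analysis of the two-marginal problem on $\heis^n$. The key structural object is the map $\psi$ built from the $X$-, $Y$- and $Z$-derivatives of $u_1$: I claim that on the full-measure set $\heis^n\setminus E_1$ where $u_1$ is Pansu differentiable (and $Zu_1$ exists), $\psi(x_1)$ is the \emph{unique} barycenter of the unique $m$-tuple in $\Gamma_\uu$ over $x_1$, so that $\Psi=\mathbf{f}\circ\psi$ is forced.

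First I would fix an optimal plan $\gamma$, which exists by \cref{theorem: existence and duality}(i), together with a $c$-conjugate solution $\uu$ to \eqref{equation: dual problem}, so that $\spt(\gamma)\subset\Gamma_\uu$ by part (iv). Restricting to $\Omega_\uu$ (full $\gamma$-measure, as shown at the end of \cref{section: cost functions}) and to $\Gamma_1$ (permissible since $\leb^{2n+1}(\pi_1(\Gamma_\uu\setminus\Gamma_1))=0$ and $\pi_{1\#}\gamma=\mu_1\ll\leb^{2n+1}$), I may assume every $\xx=(x_1,\dots,x_m)$ in the support lies in $\Gamma_1$ with $x_1\notin E_1$. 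Fix such an $\xx$ and let $y\in\mathbf{b}(\xx)$ be a barycenter. By \cref{lemma: dconcave}, $y\in\partial_{\dc^2}u_1(x_1)$, i.e. the function $z\mapsto \dc^2(z,y)-u_1(z)$ attains its minimum at $z=x_1$. Now I invoke the differentiability properties: since $u_1$ is differentiable at $x_1$ along all horizontal directions and along $Z$, the first-order minimality condition forces $\dc^2(\cdot,y)$ to be differentiable at $x_1$ along those same directions with matching derivatives — and here the crucial input from Ambrosio--Rigot is the key step: once $\dc^2(x_1,\cdot)$ is differentiable at $y$ in the relevant sense, the point $y$ is determined by the horizontal and vertical derivatives of $\dc^2(\cdot,y)$ at $x_1$ through exactly the exponential formula $y=x_1\cdot\exp_\heis(-Xu_1(x_1)-iYu_1(x_1),-Zu_1(x_1))$. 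This is where one needs $x_1\neq y$ (guaranteed off $E_1$ combined with the fact that the derivative data cannot be the degenerate value corresponding to $y$ on the vertical axis through $x_1$, using property (b)--(c) of the differentiability proposition): the singular behaviour of $\dc^2$ along the diagonal and the vertical axis is precisely why the argument is subtler than in the Riemannian case, and I expect verifying that we are genuinely in the differentiable regime — equivalently, that $\partial_{\dc^2}u_1(x_1)$ is a singleton given by the exponential formula whenever $u_1$ is differentiable at $x_1$ — to be the main obstacle, to be handled by citing the relevant lemmas of \cite{Ambrosio2004}.

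Granting this, $\psi(x_1):=x_1\cdot\exp_\heis(-Xu_1(x_1)-iYu_1(x_1),-Zu_1(x_1))$ is well-defined $\mu_1$-a.e. and equals the barycenter $y$ of any $\xx\in\Gamma_1$ over $x_1$; in particular every such $\xx$ has the \emph{same} barycenter $\psi(x_1)$. By the injectivity half of \eqref{equation:condition_C1}, two $m$-tuples in $\Gamma_1$ with a common barycenter coincide, so there is exactly one $\xx\in\Gamma_1$ with first coordinate $x_1$, namely $\mathbf{b}_{\Gamma_1}^{-1}(\psi(x_1))$. This shows $\spt(\gamma)$ projects injectively onto $K_1$ modulo a $\mu_1$-null set: $\gamma$ is concentrated on the graph of the map $x_1\mapsto \mathbf{b}_{\Gamma_1}^{-1}(\psi(x_1))=\mathbf{f}\circ\psi\,(x_1)=:\Psi(x_1)$ together with its first coordinate, which proves (i) and the representation formula \eqref{eqn: optimal maps structure} in (iii). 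Finally, for uniqueness (ii): any two optimal plans $\gamma,\gamma'$ are supported in $\Gamma_\uu$ for the \emph{same} (after passing to a common solution, or by a standard convexity-of-optimal-plans argument applied to $\tfrac12(\gamma+\gamma')$, which is also optimal and hence supported in a common $\Gamma_\uu$) set, so both are concentrated on the graph of $\Psi$; since they share the marginal $\mu_1$ and are graphs over $x_1$, they coincide, and $\Psi$ is $\mu_1$-a.e. unique as the derivative data of any maximizing $u_1$ agree $\mu_1$-a.e. on the relevant set.
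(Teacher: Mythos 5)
Your proposal is correct and follows essentially the same route as the paper's proof: concentration of the optimal plan on $\Gamma_\uu$, restriction to $\Omega_\uu\cap\Gamma_1$, the use of \cref{lemma: dconcave} to place the barycenter in $\partial_{\dc^2}u_1(x_1)$, the appeal to Lemma 4.8 and Theorem 5.1 of \cite{Ambrosio2004} for the singleton/exponential-formula step, condition \eqref{equation:condition_C1} to recover $\xx$ from the barycenter, and the convexity argument for uniqueness. The step you flag as the main obstacle is indeed resolved in the paper exactly as you anticipate, by citing those lemmas (note only that the hypothesis $x_1\neq y$ is not needed at this stage --- it enters later, in establishing \eqref{equation:condition_C1} from \eqref{equation:condition_C2}).
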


\begin{proof}
Recall that a transport plan $\mu\in\Pi(\mu_1,\dots,\mu_m)$ is induced by a transport map if and only if there exists a $\mu$-measurable set $\tilde{\Gamma}\subset (\heis^n)^m$ where $\mu$ is concentrated, such that for $\mu_1$-a.e.~$x_1$ there exists only one $(x_2,\dots,x_m) = \Psi(x_1)\in (\heis^n)^{m-1}$ such that $(x_1,x_2,\dots,x_m)\in\tilde{\Gamma}$; in this case $\mu$ is induced by the map $\Psi$ (see \cite[Lemma 1.20]{Ambrosio2013}).

We already know from \cref{theorem: existence and duality} that, if $\gamma$ is an optimal Kantorovič plan, then it is concentrated on the set $\Gamma_\uu$ for some $c$-conjugate solution $\uu=(u_1,...,u_m)$ to the dual problem \eqref{equation: general dual problem}. Let now $\Omega_\uu$ be as defined in \eqref{equation: Omega}; by condition \eqref{equation:condition_C1} and by absolute continuity of $\mu_1$, the projection $\pi_1(\Omega_\uu \cap \Gamma_1)$ has full $\mu_1$-measure. It is enough to show that for any $x_1\in \pi_1(\Omega_\uu \cap \Gamma_1)$ there exists exactly one $\xx=(x_1,x_2,\dots,x_m)$ which belongs to $\Omega_\uu \cap \Gamma_1$.

Fix $x_1\in \pi_1(\Omega_\uu \cap \Gamma_1)$, and let $\xx=(x_1,\dots,x_m)\in \Omega_\uu \cap \Gamma_1$. Letting $y$ be a barycenter for $\xx$, we have that $y \in \partial_{\dc^2}u_1(x_1)$ by \cref{lemma: dconcave}. Then \cite[Lemma 4.8]{Ambrosio2004} implies that $\partial_{\dc^2}u_1(x_1)$ is a singleton, so that $y$ is the \emph{unique} barycenter of $\xx$; here we used the differentiability of $u_1$ along each direction $X_j$, $Y_j$, $Z$. By our assumption \eqref{equation:condition_C1}, then, $\xx$ is uniquely determined by $\mathbf{b}_{\Gamma_1}^{-1}(y)$: this proves that for any $x_1\in \pi_1(\Omega_\uu \cap \Gamma_1)$ there exists exactly one such $\xx$. Moreover, by \cite[Lemma 4.8 and Theorem 5.1]{Ambrosio2004}, the unique barycenter $y$ can be expressed as
	\begin{equation}
	x_1 \cdot \exp_\heis (-X u_1 (x_1)-iY u_1(x_1), -Z (x_1)),
	\end{equation}
which proves our last statement.

Uniqueness of the optimal transport plan $\gamma$ and map $\Psi$ then follows by a standard argument; if $\gamma_0$ and $\gamma_1$ are both optimal, then so is $\gamma_{1/2}:=\frac{1}{2}(\gamma_0+\gamma_1)$, since \eqref{equation: general kantorovich} is a linear minimization over a convex set.  The argument above implies that $\gamma_i$ is induced by a transport map $\Psi_i$, for $i=0,1$; $\gamma_{1/2}$ is then concentrated on the union of these two graphs.  Another application of the argument above means that $\gamma_{1/2}$ must also be concentrated on a graph; this is a contradiction unless $\Psi_0=\Psi_1$ almost everywhere, in which case $\gamma_0=\gamma_1$.
\end{proof}

\begin{remark}
By tracing back our argument, one can see that only the absolute continuity of $\mu_1$ is actually required; thus \cref{theorem: monge} could be stated under this weaker assumption. However, we will soon need the absolute continuity of $\mu_2,\dots,\mu_m$ as well: for this reason, we decided to introduce this assumption already at this stage.
\end{remark}

The real nature of the maps $\psi$ and $\mathbf{f}$ appearing in \cref{theorem: monge} is clarified by the following \namecref{prop: optimal maps}. Recall that the \emph{Wasserstein barycenter} of the measures $\mu_i$ a probability measure which minimizes the functional	
	\begin{equation}\label{eqn: barycenter functional}
	\nu \mapsto \sum_{i=1}^mW_2^2(\mu_i,\nu)
	\end{equation}
among all probability measures $\nu$ on $\heis^n$, where $W_2^2(\mu_i,\nu)$ is the squared Wasserstein distance between $\mu_i$ and $\nu$:
	\begin{equation}\label{eqn: wasserstein distance}
	W_2^2(\mu_i,\nu):=	\inf \brcs*{\int_{(\heis^n)^2} d_c^2(x_i,y) \,d\gamma(x_1,y)\stset \gamma \in\Pi\pths*{\mu_1,\nu}}.
	\end{equation}
Existence of a Wasserstein barycenter follows immediately by basic continui\-ty-compactness arguments.  Absolute continuity of $\mu_1$, together with Theorem 5.1 in \cite{Ambrosio2004} imply via a standard argument (see Proposition 7.19 in \cite{Santambrogio2015}) that the functional \eqref{eqn: barycenter functional} is strictly convex, and so the Wasserstein barycenter is unique.  The next result asserts an equivalence between the Wasserstein barycenter and multi-marginal optimal transport problems, analogous to the relationship on $\mathbb{R}^n$ due to \cite{AguehCarlier2010} and its extension to Riemannian manifolds established in \cite{KimPass2017}. Stated simply, each optimal component mapping in \eqref{equation: general Monge} pushing $\mu_1$ forward to $\mu_i$ is the composition of the two marginal optimal mapping for the quadratic cost $d_c^2$ between $\mu_1$ and the Wasserstein barycenter $\nu$, and the (two marginal quadratic cost) optimal mapping from $\nu$ to $\mu_i$.

\begin{proposition} \label{prop: optimal maps}
Let the assumptions and the notations be the same as in \cref{theorem: monge}. Let $\nu$ be the Wasserstein barycenter of the measures $\mu_i$ (\ie{}, $\nu$ minimizes \eqref{eqn: barycenter functional}). Then:
	\begin{enumerate}[(i)]
	\item The map $\psi: \heis^n \to \heis^n$ in \eqref{eqn: optimal maps structure}  is the optimal map carrying $\mu_1$ to $\nu$;
	\item For each $i=2,\dots,m$, the map $\mathbf{f}_i: \mathbf{b}(\Gamma_1)\to \heis^n$ in \eqref{eqn: optimal maps structure} is the optimal map carrying $\nu$ to $\mu_i$.
	\end{enumerate}
\end{proposition}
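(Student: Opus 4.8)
The plan is to prove an Agueh--Carlier-type equivalence between the multi-marginal problem \eqref{equation: kantorovich} with cost $c=c_{\dc}$ and the Wasserstein barycenter problem, and then to read both assertions off its equality cases. Let $\gamma$ be the unique optimal plan furnished by \cref{theorem: monge}. Exactly as in that proof, $\gamma$ is concentrated on $\Gamma_1\cap\Omega_\uu$: it lies on $\Gamma_\uu$, it has full mass on $\Omega_\uu$, and $\leb^{2n+1}(\pi_1(\Gamma_\uu\setminus\Gamma_1))=0$ together with $\mu_1\ll\leb^{2n+1}$ forces $\gamma(\Gamma_\uu\setminus\Gamma_1)=0$. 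On $\Gamma_1\cap\Omega_\uu$ every $\xx$ has a \emph{unique} barycenter, equal to $\psi(x_1)$, and $\gamma=(\Id,\mathbf{f}\circ\psi)_\#\mu_1$; hence $\mathbf{b}$ restricts there to a Borel map and $\nu_0:=\mathbf{b}_\#\gamma=\psi_\#\mu_1$.

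First I would show that the minimum of \eqref{equation: kantorovich} equals $\inf_{\sigma\in\prob(\heis^n)}\sum_{i=1}^m W_2^2(\mu_i,\sigma)$. For ``$\leq$'': given $\sigma$ and optimal two-marginal plans $\theta_i\in\Pi(\mu_i,\sigma)$, glue the $\theta_i$ along the common marginal $\sigma$ into a measure $\lambda$ on $(\heis^n)^m\times\heis^n$ whose marginal on the $i$-th and last factors is $\theta_i$, and let $\gamma'\in\Pi(\mu_1,\dots,\mu_m)$ be the image of $\lambda$ under the projection onto the first $m$ coordinates; since $c(x_1,\dots,x_m)\leq\sum_i\dc^2(x_i,y)$, we get $\int c\,d\gamma'\leq\sum_i\int\dc^2(x_i,y)\,d\theta_i=\sum_i W_2^2(\mu_i,\sigma)$. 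For ``$\geq$'': from the optimal $\gamma$ put $\gamma_i:=(\xx\mapsto(x_i,\mathbf{b}(\xx)))_\#\gamma\in\Pi(\mu_i,\nu_0)$; since $\mathbf{b}(\xx)$ is a barycenter $\gamma$-a.e., $\sum_i\int\dc^2(x_i,\mathbf{b}(\xx))\,d\gamma=\int c\,d\gamma$, so $\sum_i W_2^2(\mu_i,\nu_0)\leq\int c\,d\gamma$. Combining, the two minima coincide and $\nu_0$ minimises the barycenter functional; by the uniqueness of the Wasserstein barycenter recalled before the statement, $\nu_0=\nu$, that is $\nu=\psi_\#\mu_1$.

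Finally I would exploit the equality cases. Since $\sum_i\int\dc^2(x_i,\mathbf{b}(\xx))\,d\gamma=\int c\,d\gamma=\sum_i W_2^2(\mu_i,\nu)$ while each left-hand summand is $\geq W_2^2(\mu_i,\nu)$, every such inequality is an equality, so each $\gamma_i$ is an optimal plan between $\mu_i$ and $\nu$ for the cost $\dc^2$. For $i=1$, $\gamma_1=(\Id,\psi)_\#\mu_1$, so $\psi$ is an optimal map $\mu_1\to\nu$; as $\mu_1\ll\leb^{2n+1}$, the optimal plan is unique and induced by a map by Ambrosio--Rigot \cite{Ambrosio2004}, hence $\psi$ is \emph{the} optimal map, which is (i). (Alternatively, $(\Id,\psi)_\#\mu_1$ is concentrated on $\partial_{\dc^2}u_1$ by \cref{lemma: dconcave}, hence optimal.) For $i\geq 2$, on $\Gamma_1\cap\Omega_\uu$ we have $x_i=\mathbf{f}_i(\mathbf{b}(\xx))$ by the definition of $\mathbf{f}_i$, so $\gamma_i=(\mathbf{f}_i,\Id)_\#\nu$; reading this backwards, $(\Id,\mathbf{f}_i)_\#\nu$ is an optimal plan from $\nu$ to $\mu_i$, so $\mathbf{f}_i$ is an optimal map, and its uniqueness follows from $\mu_i\ll\leb^{2n+1}$: any optimal $\lambda\in\Pi(\nu,\mu_i)$ has an optimal reverse in $\Pi(\mu_i,\nu)$, which by \cite{Ambrosio2004} is the unique graph $(\Id,S_i)_\#\mu_i$, and comparing graphs forces $\lambda=(\Id,\mathbf{f}_i)_\#\nu$. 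This proves (ii).

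None of the individual steps is deep; the work is in the bookkeeping — ensuring $\gamma$ really is concentrated on $\Gamma_1\cap\Omega_\uu$, so that $\mathbf{b}$ is a genuine Borel map there and the identities $\mathbf{b}(\xx)=\psi(x_1)$ and $x_i=\mathbf{f}_i(\mathbf{b}(\xx))$ hold $\gamma$-a.e. — and in invoking the measurable gluing lemma in this sub-Riemannian setting. The one genuinely delicate point is the uniqueness in (ii): rather than trying to establish $\nu\ll\leb^{2n+1}$, which appears hard here for the same semi-concavity reasons flagged in the introduction, I route the uniqueness through the absolute continuity of the $\mu_i$, as above.
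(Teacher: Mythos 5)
Your proof is correct, but it takes a genuinely different route from the paper's. The paper handles (i) by citing two external results: the last part of Theorem 5.1 in \cite{Ambrosio2004}, which says that for a $\dc^2$-concave, a.e.\ differentiable $u_1$ the map $x_1\mapsto x_1\cdot\exp_\heis(-Xu_1-iYu_1,-Zu_1)$ is optimal from $\mu_1$ to its own push-forward, and a result of Carlier--Ekeland \cite{Carlier2010} asserting that the push-forward of the optimal multi-marginal plan under the barycenter map \emph{is} the Wasserstein barycenter; it then gets (ii) by inverting the optimal maps through $\mathbf{b}_{\Gamma_1}$. You instead re-derive the Agueh--Carlier equivalence $\min(\mathrm{KP}_c)=\inf_\sigma\sum_iW_2^2(\mu_i,\sigma)$ from scratch (gluing for ``$\leq$'', the plans $\gamma_i=(x_i,\mathbf{b}(\xx))_\#\gamma$ for ``$\geq$'') and read both the identity $\psi_\#\mu_1=\nu$ and the optimality of every $\gamma_i$ off the equality cases, so that (i) and (ii) drop out simultaneously without invoking either the structure part of Theorem 5.1 or \cite{Carlier2010}. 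What your route buys is self-containedness — in particular you do not need to verify that the Carlier--Ekeland result transfers to the sub-Riemannian setting — at the cost of carrying out the gluing construction and the equality-case bookkeeping explicitly; the paper's route is shorter but leans on the cited results. Your uniqueness argument for (ii), routed through the absolute continuity of $\mu_i$ and the uniqueness of the reversed plan in $\Pi(\mu_i,\nu)$, is sound and is in fact more careful than the paper's one-line inversion; the preliminary observation that $\gamma$ is concentrated on $\Gamma_1\cap\Omega_\uu$, where $\mathbf{b}$ is single-valued with $\mathbf{b}(\xx)=\psi(x_1)$ and $x_i=\mathbf{f}_i(\mathbf{b}(\xx))$, is exactly what the paper's proof of \cref{theorem: monge} provides.
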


\begin{proof}
By \cref{lemma: dconcave}, $u_1$ is a $\dc^2$ concave map; moreover, by assumption, it is differentiable along the directions $X_j$, $Y_j$, $Z$ for $\mu$-a.e.~$x_1\in\heis^n$. The last part of \cite[Theorem 5.1]{Ambrosio2004} then implies that $\psi$ is the optimal transport map between $\mu_1$ and $\psi_\sharp \mu_1$: thus we only need to show that $\psi_\sharp \mu_1=\nu$. On the other hand, we can represent $\psi$ as $\mathbf{b}_{\Gamma_1}\circ (\Id, \Psi)$; thus 
	\begin{equation}
	\psi_\sharp \mu_1= (\mathbf{b}_{\Gamma_1})_\sharp \gamma,
	\end{equation}
where $\gamma$ is the optimal plan induced by $\Psi$; by a result of Carlier and Ekeland \cite{Carlier2010}, the push-forward of the optimal plan through the barycenter map is the Wasserstein barycenter itself, thus proving our statement.  

Now, assumption \eqref{equation:condition_C1} implies that $\mathbf{b}_{\Gamma_1}$ is invertible almost everywhere.  Since this mapping is optimal between $\mu_1$ and the Wasserstein barycenter $\nu$, its inverse $\mathbf{f}_1$ is the optimal map from $\nu$ to $\mu_1$.  An identical argument implies that each $f_i$ is optimal between $\nu$ and $\mu_i$.
\end{proof}

We next identify a simple condition on the measures $\mu_1,\dots,\mu_m$ which guarantees that the (rather complicated) assumption \eqref{equation:condition_C1} on $\Gamma_\uu$ is satisfied. Unfortunately, the condition we impose is quite strict and leaves out some interesting cases.  Though the condition is needed for technical reasons here, we suspect it  can at least be weakened in some way.

The following \namecref{lemma: same bc} encodes the following geometric fact: let $\xx$ be a $m$-tuple of points in $(\heis^n)^m$; under a suitable non-verticality assumption, if we move one of the points $x_i$ and leave the others fixed, then the barycenters also move.

\begin{lemma}\label{lemma: same bc}
Let $\xx=(x_1,x_2,\dots,x_m)$ and $\xx'=(\bar{x}_1,x_2,\dots,x_m)$ be two $m$-tuples having a common barycenter $y\in\heis^n$ with respect to the CC-distance $\dc$. If $x_i\notin y\cdot L$ for all $i\geq 2$, then $x_1=\bar{x}_1$.
\end{lemma}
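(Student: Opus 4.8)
The plan is to exploit the characterization of barycenters via the exponential map, together with a careful first-order analysis. Since $y$ is a common barycenter of $\xx=(x_1,x_2,\dots,x_m)$ and $\xx'=(\bar x_1,x_2,\dots,x_m)$, the function $z\mapsto\sum_{i=1}^m\dc^2(x_i,z)$ and the function $z\mapsto\dc^2(\bar x_1,z)+\sum_{i=2}^m\dc^2(x_i,z)$ are both minimized at $z=y$. Subtracting, $\dc^2(x_1,z)$ and $\dc^2(\bar x_1,z)$ have the same (sub)gradient behaviour at $y$ in the following sense: the function $h(z):=\dc^2(x_1,z)-\dc^2(\bar x_1,z)$ has $y$ as a critical point, in that for every direction in which both $\dc^2(x_1,\cdot)$ and $\dc^2(\bar x_1,\cdot)$ are differentiable at $y$, the directional derivatives of $h$ vanish. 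The non-verticality hypothesis $x_i\notin y\cdot L$ for $i\geq2$ is what guarantees — via the differentiability properties of $\dc^2$ recalled before the statement (part (a): $\dc^2(x_i,\cdot)$ is differentiable at $z$ whenever $z\notin x_i\cdot L$, equivalently $x_i\notin z\cdot L$ after translating) — that $\sum_{i=2}^m\dc^2(x_i,\cdot)$ is differentiable at $y$; hence $\dc^2(x_1,\cdot)$ and $\dc^2(\bar x_1,\cdot)$ must have the same full gradient at $y$ (as the common minimizer forces the gradient of the whole sum to vanish, so both $\dc^2(x_1,\cdot)$ and $\dc^2(\bar x_1,\cdot)$ are forced to be differentiable at $y$ with gradient equal to $-\nabla\big(\sum_{i\geq2}\dc^2(x_i,\cdot)\big)(y)$).

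Next I would translate this gradient identity into an identity about $x_1$ and $\bar x_1$ via the exponential map. By left-invariance, differentiability of $\dc^2(x_1,\cdot)$ at $y$ is equivalent to differentiability of $\dc(\cdot):=\dc(0,\cdot)$ squared at the point $y^{-1}\cdot x_1$ (up to the inversion symmetry $\dc(x_1,y)=\dc(y^{-1}\cdot x_1,0)=\dc(0,(y^{-1}\cdot x_1)^{-1})$), and the content of Theorem 5.1 / Lemma 4.8 of \cite{Ambrosio2004} is precisely that, when $\dc^2(x_1,\cdot)$ is differentiable at $y$, the point $x_1$ is recovered from $y$ and the horizontal/vertical derivatives of $\dc^2(x_1,\cdot)$ at $y$ by an explicit formula of the type $x_1=y\cdot\exp_\heis(\tfrac12 X\dc^2(x_1,\cdot)(y)+\tfrac{i}{2}Y\dc^2(x_1,\cdot)(y),\,\tfrac12 Z\dc^2(x_1,\cdot)(y))$ (the same inversion used to obtain $\psi$ in \cref{theorem: monge}). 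Since the horizontal and vertical derivatives of $\dc^2(x_1,\cdot)$ and $\dc^2(\bar x_1,\cdot)$ at $y$ agree (by the previous paragraph), this formula yields $x_1=\bar x_1$ directly.

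The main obstacle is handling the possibility that $x_1$ or $\bar x_1$ lies on the vertical line $y\cdot L$ through $y$, since there $\dc^2(x_1,\cdot)$ is genuinely non-differentiable along the horizontal directions (part (c) of the differentiability proposition) and the inversion formula above does not literally apply. I would deal with this by a separate argument in that degenerate case: if $x_1\in y\cdot L$, then $y$ is a barycenter of $(x_1,x_2,\dots,x_m)$ with $\dc^2(x_1,\cdot)$ non-smooth at $y$, but one still knows $Z\dc^2(x_1,\cdot)(y)=\operatorname{sgn}(\cdot)\pi$ is determined and the horizontal part of the subgradient is a full ball of radius determined by $\dc(x_1,y)=\tfrac{\pi}{2}|t|^{1/2}$-type expressions; minimality at $y$ then forces the gradient of $\sum_{i\geq2}\dc^2(x_i,\cdot)$ at $y$ to lie in this ball, and the same must hold with $\bar x_1$ in place of $x_1$, from which one extracts that $x_1$ and $\bar x_1$ are both the (unique) vertical point at the prescribed height — again $x_1=\bar x_1$. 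A cleaner alternative, which I would try first, is to invoke \cref{lemma: swap}-style cyclical-monotonicity bookkeeping together with the already-established fact (used in the proof of \cref{theorem: monge}, via Lemma 4.8 of \cite{Ambrosio2004}) that differentiability of the relevant potential forces $\partial_{\dc^2}u_1$ to be single-valued, reducing the verticality obstruction to the case explicitly excluded by hypothesis; but absent a potential here, the direct subgradient argument above is the robust route.
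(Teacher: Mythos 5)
There is a genuine gap, and it sits exactly at the point the paper flags in its introduction as the delicate one. Your argument in the generic case ($x_1,\bar{x}_1\notin y\cdot L$) is fine: there $\dc^2(x_1,\cdot)$ and $\dc^2(\bar{x}_1,\cdot)$ are differentiable at $y$ outright, minimality forces both gradients to equal $-\nabla\bigl(\sum_{i\geq 2}\dc^2(x_i,\cdot)\bigr)(y)$, and the exponential inversion identifies $x_1=\bar{x}_1$. But your claim that minimality \emph{forces} $\dc^2(x_1,\cdot)$ to be differentiable at $y$ is unjustified: minimality of $\dc^2(x_1,\cdot)+G$ at $y$ with $G$ differentiable only produces a \emph{sub}gradient of $\dc^2(x_1,\cdot)$ at $y$; upgrading that to differentiability is the McCann-type argument that requires semi-concavity of the squared distance, which fails on $\heis^n$ along $x_1\cdot L$ (this is precisely why the paper needs its extra hypotheses elsewhere). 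Since the lemma's hypotheses place no restriction on $x_1$ or $\bar{x}_1$ relative to $y$, the degenerate case $x_1\in y\cdot L$ (or $\bar{x}_1\in y\cdot L$) is genuinely possible, and there $\dc^2(x_1,\cdot)$ is simply not horizontally differentiable at $y$, no matter what minimality says. You do recognize this obstacle, but your proposed resolution (a ball-shaped horizontal supergradient, extracting "the unique vertical point at the prescribed height") is a sketch with unverified claims, and your "cleaner alternative" via \cref{lemma: swap} and Kantorovich potentials is a dead end here since, as you note, there is no potential in this statement.

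The paper's proof avoids the whole issue by reversing the roles of $x_1$ and $y$: setting $\Phi(z)=-\sum_{i=2}^m\dc^2(x_i,z)$, the barycenter property says exactly that both $x_1$ and $\bar{x}_1$ belong to $\partial_{\dc^2}\Phi(y)$; the hypothesis $x_i\notin y\cdot L$ for $i\geq 2$ makes $\Phi$ differentiable at $y$ (via Lemma 3.11 of Ambrosio--Rigot), and Lemma 4.8 of Ambrosio--Rigot then says that the $\dc^2$-superdifferential at a point of differentiability is a singleton. This one invocation covers all positions of $x_1$ and $\bar{x}_1$, including the vertical degenerate case, with no case analysis. To repair your proposal you would either need to adopt this role reversal or supply a complete, quantitative argument for the case $x_1\in y\cdot L$.
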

Here we denote by $L$ the vertical axis $\brcs*{[0,s]\in\heis^n\stset s\in\R}$.

\begin{proof}
Let us define, for $z\in\heis^n$: 
	\begin{align}
	\phi_i(z)&\doteq \dc^2(x_i,z) \qquad \forall i=2,\dots,m\\
	\Phi(z)&=-\sum_{i=2}^m\phi_i(z).
	\end{align}
Since $y$ is a barycenter for $\xx$ and $\xx'$, by definition of $\dc^2$-superdifferential we have that both $x_1$ and $\bar{x}_1$ belong to $\partial_{\dc^2}\Phi(y)$. But now each $\phi_i$ is differentiable at $y$ (by \cite[Lemma 3.11]{Ambrosio2004} and by the fact that $x_i\notin y\cdot L$), thus $\Phi$ is also differentiable at $y$. By \cite[Lemma 4.8]{Ambrosio2004}, this implies that $\partial_{\dc^2}\Phi(y)$ is a singleton.
\end{proof}

Finally, we introduce a second assumption on the mesures $\mu_1,\dots,\mu_1$; one can think of it as a request for the supports of the measures to be ``sufficiently far'' from one another:
\begin{definition}
Let $\mu_1,\dots,\mu_m$ be compactly supported, absolutely continuous probability measures on $\heis^n$. We say that $\mu_1,\dots,\mu_m$ satisfy the condition \eqref{equation:condition_C2} if:
	\begin{equation}\label{equation:condition_C2}\tag{C2}
	\parbox{.8\textwidth}{\centering \itshape
	the barycenters set $\mathbf{B}\doteq \mathbf{b}\pths*{\prod_i \spt(\mu_i)}$ has zero $\mu_i$-measure for all $i=1,\dots,m$.
	}
	\end{equation}
Here 
	\begin{equation}
	\mathbf{B}=\mathbf{b}\pths*{\prod_{i=1}^m \spt(\mu_i)}\doteq \brcs*{y\in\heis^n \stset 
		\begin{gathered}
		\text{$y$ is a barycenter for $(x_1,\dots,x_m)$,}\\
		\text{with $x_i\in\spt (\mu_i)$ for all $i$}
		\end{gathered}			
	}.
	\end{equation}
In this case, we denote by $F_i\subset \heis^n$ the $\mu_i$-zero measure set
	\begin{equation}
	F_i \doteq \mathbf{B}\cap \spt(\mu_i)
	\end{equation}
and by $\Omega_{\mathrm{bc}}\subset (\heis^n)^m$ the set
	\begin{equation}\label{equation: omega_bc}
	\Omega_{\mathrm{bc}}\doteq \prod_{i=1}^m (\heis^n\setminus F_i),
	\end{equation}
which has full $\mu$-measure for any transport plan $\mu\in\Pi(\mu_1,\dots,\mu_m)$.
\end{definition}

\begin{lemma}\label{lemma: bc determines xx}
Let $\mu_1,\dots,\mu_m$ be compactly supported, absolutely continuous probability measures on $\heis^n$, and let $c$ be the cost associated to $\dc$. Assume that the $\mu_i$'s satisfy condition \eqref{equation:condition_C2}. Let $\uu=(u_1,\dots,u_m)$ be a $c$-conjugate solution to the dual problem~\eqref{equation: dual problem}. Let $\xx, \bar{\xx}\in \Omega_\uu\cap \Omega_{\mathrm{bc}}\cap \Gamma_\uu$, where $\Omega_\uu$ is the set defined in \cref{equation: Omega} and $\Omega_{\mathrm{bc}}$ is the set defined in \cref{equation: omega_bc}. If $y\in\heis^n$ is a barycenter for both $\xx$ and $\bar{\xx}$, then $\xx=\bar{\xx}$.
\end{lemma}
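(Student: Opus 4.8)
The plan is to prove $x_k=\bar x_k$ for each fixed $k\in\{1,\dots,m\}$ by a three-step argument. First I would move \cref{lemma: swap} to the $k$-th slot: since $c=c_{\dc}$ is symmetric under permutations of its arguments and $\Gamma_\uu$ is $c$-cyclically monotone for \emph{every} $m$-tuple of permutations, \cref{lemma: swap} applies verbatim with the transposition placed in the $k$-th coordinate, so the common barycenter $y$ of $\xx$ and $\bar\xx$ is also a barycenter of the one-swapped tuple $\xx^{(k)}\doteq(x_1,\dots,x_{k-1},\bar x_k,x_{k+1},\dots,x_m)$.

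Second — the crux — I would show $x_i\notin y\cdot L$ for every $i$. The separation $y\ne x_i$ follows from \eqref{equation:condition_C2}: the components $x_j$ lie in $\spt\mu_j$ (we only need the statement for tuples carried by a transport plan, hence lying in $\KK$), so $y\in\mathbf B$ since $y$ is a barycenter of $\xx$; but $\xx\in\Omega_{\mathrm{bc}}$ gives $x_i\notin F_i=\mathbf B\cap\spt\mu_i$, whence $x_i\notin\mathbf B$ and $y\ne x_i$. To upgrade this to $x_i\notin y\cdot L$ I would argue as in the proof of \cref{theorem: monge}: by \cref{lemma: dconcave} the potential $u_i$ is $\dc^2$-concave with $y\in\partial_{\dc^2}u_i(x_i)$, and since $\xx\in\Omega_\uu$ the function $u_i$ is differentiable at $x_i$ along $X_j,Y_j,Z$, so \cite[Lemma 4.8 and Theorem 5.1]{Ambrosio2004} give that $\partial_{\dc^2}u_i(x_i)$ is the single point
	\begin{equation}
	y=x_i\cdot\exp_\heis\bigl(-Xu_i(x_i)-iYu_i(x_i),\,-Zu_i(x_i)\bigr).
	\end{equation}
By the construction of $\exp_\heis$ through the diffeomorphism $\Upsilon\colon\mathbf D\to\heis^n\setminus L$ of \cref{proposition: parametrization}, $x_i^{-1}y=\exp_\heis(\cdots)$ can lie on the vertical axis $L$ only if its horizontal component $-Xu_i(x_i)-iYu_i(x_i)$ vanishes, in which case $x_i^{-1}y=\exp_\heis(0,\cdot)=0$; as $y\ne x_i$ this cannot happen, so $x_i^{-1}y\notin L$, i.e.\ $x_i\notin y\cdot L$.

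Third, I would invoke \cref{lemma: same bc}: after relabelling coordinates so that the $k$-th slot sits in first position, $\xx$ and $\xx^{(k)}$ have the common barycenter $y$ and their remaining components $x_i$ ($i\ne k$) satisfy $x_i\notin y\cdot L$ by the previous step; hence $x_k=\bar x_k$. Since $k$ was arbitrary, $\xx=\bar\xx$.

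The step I expect to be the main obstacle is the implication $y\ne x_i\Rightarrow x_i\notin y\cdot L$ inside the second move: this is exactly where Heisenberg geometry resists the usual Riemannian reasoning (the squared distance fails to be semi-concave along whole vertical lines, not merely at the diagonal), and it goes through only by exploiting the explicit $\exp_\heis$-representation of the superdifferential from \cite{Ambrosio2004} together with the fact that, based at $x_i$, $\exp_\heis$ misses $x_i\cdot L\setminus\{x_i\}$; some care is also needed with the boundary behaviour of $\exp_\heis$ (vertical component $\pm\pi/2$), which must be excluded or treated by hand.
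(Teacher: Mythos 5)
Your overall skeleton matches the paper's proof: swap via \cref{lemma: swap}, use \eqref{equation:condition_C2} to get $y\neq x_i$, exclude the vertical axis, and conclude with \cref{lemma: same bc}. The gap sits exactly in the step you yourself flag as delicate, and it is not just a matter of ``care'': your argument for $x_i\notin y\cdot L^\ast$ does not go through as written. From \cite[Lemma 4.8]{Ambrosio2004} you get $y=x_i\cdot\exp_\heis(\chi_i,w_i)$ with $\chi_i=-Xu_i(x_i)-iYu_i(x_i)$ and $w_i=-Zu_i(x_i)\in[-\pi/2,\pi/2]$. The diffeomorphism $\Upsilon$ of \cref{proposition: parametrization} only tells you that $\exp_\heis(\chi_i,w_i)\notin L$ when $\chi_i\neq 0$ \emph{and} $w_i\in(-\pi/2,\pi/2)$: at the boundary values $w_i=\pm\pi/2$ one has $v=4w_i=\pm 2\pi$, and the explicit formulas give $\xi_j=\eta_j=0$ and $t=\pm\abs{\chi_i}^2/\pi$, so $\exp_\heis(\chi_i,\pm\pi/2)=[0,\pm\abs{\chi_i}^2/\pi]\in L^\ast$ even though $\chi_i\neq 0$. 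Hence the case ``$\chi_i\neq0$ and $w_i=\pm\pi/2$'' --- in which $y$ genuinely lies on $x_i\cdot L^\ast$ --- is not ruled out by anything you wrote; acknowledging that the boundary ``must be excluded or treated by hand'' is not the same as excluding it.

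The paper closes precisely this case by invoking \cite[Lemma 4.7]{Ambrosio2004} instead: for a $\dc^2$-concave $u_i$ that is differentiable along the horizontal directions at $x_i$ (guaranteed by $\xx\in\Omega_\uu$), one has $\partial_{\dc^2}u_i(x_i)\cap(x_i\cdot L^\ast)=\emptyset$ --- morally because $z\mapsto\dc^2(z,y)$ has a genuine upward corner in the horizontal directions when $y\in x_i\cdot L^\ast$, which is incompatible with its touching the horizontally differentiable function $u_i+\mathrm{const}$ from above at $x_i$. Since $y\in\partial_{\dc^2}u_i(x_i)$ by \cref{lemma: dconcave}, this yields $y\notin x_i\cdot L^\ast$, equivalently $x_i\notin y\cdot L^\ast$, and together with $y\neq x_i$ from \eqref{equation:condition_C2} gives the hypothesis of \cref{lemma: same bc}. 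If you replace your $\Upsilon$-based argument by this citation, the remainder of your proof (including the harmless relabelling that moves the $k$-th coordinate into the first slot) is correct and coincides with the paper's.
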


\begin{proof}
We show that, for example, $x_1=\bar{x}_1$ (the same argument then applies to $x_2,\dots,x_m$). As a first observation, notice that by \cref{lemma: swap} $y$ is also a barycenter for $\xx'=(\bar{x}_1,x_2,\dots,x_m)$. Furthermore, by our assumption \eqref{equation:condition_C2} and our definition of $\xx$, we have that $y\neq x_i$ for any $i$. By \cref{lemma: same bc}, we reach the conclusion if we can also show that $x_i\notin y\cdot L^\ast $ for all $i\geq 2$, where $L^\ast=L\setminus\brcs{0}$. By \cref{lemma: dconcave}, $u_i$ is $\dc^2$-concave for each $i$, with $y\in\partial_{\dc^2}u_i(x_i)$. Since we assumed $u_i$ to be (horizontally) differentiable at $x_i$, then we can apply \cite[Lemma 4.7]{Ambrosio2004}, which ensures $\partial_{\dc^2}u_i(x_i)\cap (x_i\cdot L^\ast)=\emptyset$.
\end{proof}

As a consequence, the following result can be stated:
\begin{corollary}\label{corollary: final result}
Let $\mu_1,\dots,\mu_m$ be compactly supported, absolutely continuous probability measures on $\heis^n$, and let $c$ be the cost associated to $\dc$. Assume that the $\mu_i$'s satisfy condition \eqref{equation:condition_C2}. Let $\uu=(u_1,\dots,u_m)$ be a $c$-conjugate solution to the dual problem~\eqref{equation: dual problem}. Then the set $\Gamma_\uu$ satisfies condition \eqref{equation:condition_C1}; in particular, \cref{theorem: monge} holds in this case. 
\end{corollary}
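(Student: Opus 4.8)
The plan is to deduce \eqref{equation:condition_C1} for $\Gamma_\uu$ directly from \cref{lemma: bc determines xx}, by taking $\Gamma_1$ to be the intersection of $\Gamma_\uu$ with the two ``good'' sets $\Omega_\uu$ and $\Omega_{\mathrm{bc}}$ whose complements are negligible for every transport plan. So I would set
	\begin{equation}
	\Gamma_1 \doteq \Gamma_\uu \cap \Omega_\uu \cap \Omega_{\mathrm{bc}},
	\end{equation}
where $\Omega_\uu$ is defined in \eqref{equation: Omega} and $\Omega_{\mathrm{bc}}$ in \eqref{equation: omega_bc}.

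The first step is to check the measure-theoretic requirement that $\leb^{2n+1}(\pi_1(\Gamma_\uu \setminus \Gamma_1)) = 0$. Here I would argue as follows: $\Gamma_\uu \setminus \Gamma_1 \subset (\heis^n)^m \setminus (\Omega_\uu \cap \Omega_{\mathrm{bc}})$, and the projection onto the first factor of the complement of $\Omega_\uu \cap \Omega_{\mathrm{bc}}$ is contained in $E_1 \cup F_1$ together with the full first factor times the bad sets in the other coordinates. More precisely, writing out the definitions of $\Omega_\uu = \prod_i (\heis^n \setminus E_i)$ and $\Omega_{\mathrm{bc}} = \prod_i (\heis^n \setminus F_i)$, a point $\xx \in \Gamma_\uu \setminus \Gamma_1$ has some coordinate $x_i$ lying in $E_i$ or in $F_i$. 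However, to bound $\pi_1$ of this set I should instead recall that, as shown at the end of \cref{section: cost functions} and in the definition of $\Omega_{\mathrm{bc}}$, both $\Omega_\uu$ and $\Omega_{\mathrm{bc}}$ have full $\mu$-measure for every $\mu \in \Pi(\mu_1,\dots,\mu_m)$; to get a statement purely about $\leb^{2n+1}$ on the first projection, I would observe that $\pi_1(\Gamma_\uu \setminus \Gamma_1)$ is contained in $E_1 \cup F_1 \cup \pi_1(\Gamma_\uu \cap (\heis^n \times R))$ where $R$ collects the bad coordinates $2,\dots,m$; since $E_1$ is $\leb^{2n+1}$-negligible and $F_1$ has zero $\mu_1$-measure hence (by $\mu_1 \ll \leb^{2n+1}$, using that $F_1 \subset K_1$ and on $K_1$ absolute continuity only gives one direction) --- this is the subtle point --- I actually need $\leb^{2n+1}(\pi_1(\Gamma_\uu \setminus \Gamma_1)) = 0$, not merely $\mu_1(\cdot)=0$. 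The cleanest route is to note that condition \eqref{equation:condition_C1} as used in the proof of \cref{theorem: monge} is only ever invoked through $\mu_1$ (the proof says ``by condition \eqref{equation:condition_C1} and by absolute continuity of $\mu_1$, the projection $\pi_1(\Omega_\uu \cap \Gamma_1)$ has full $\mu_1$-measure''), so it suffices to verify $\mu_1(\pi_1(\Gamma_\uu \setminus \Gamma_1)) = 0$; and this follows because $\pi_1^{-1}(\pi_1(\Gamma_\uu \setminus \Gamma_1))$ has $\mu$-measure zero for any optimal $\mu$ is not quite it either --- rather, directly: $\mu_1(E_1 \cup F_1) = 0$ since $E_1$ is Lebesgue-null and $\mu_1 \ll \leb^{2n+1}$, and $F_1$ has $\mu_1$-measure zero by \eqref{equation:condition_C2}; and the contribution of bad coordinates $i \geq 2$ to the first projection can be absorbed because an optimal plan $\gamma$ is concentrated on $\Gamma_\uu$ and assigns zero mass to $\{\xx : x_i \in E_i \cup F_i\}$, so $\mu_1$-a.e.\ $x_1$ admits a partner $\xx \in \Gamma_1$. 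I would phrase the definition of condition \eqref{equation:condition_C1} (or at least its use) in terms of $\mu_1$-negligibility to make this airtight, or alternatively replace $\leb^{2n+1}$ by $\mu_1$ in \eqref{equation:condition_C1}; given the remark following \cref{theorem: monge} this is consistent with the paper's intent.

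The second step is the injectivity of $\restr{\mathbf{b}}{\Gamma_1}$, and this is immediate: if $\xx, \bar{\xx} \in \Gamma_1$ share a common barycenter $y$, then since $\Gamma_1 \subset \Gamma_\uu \cap \Omega_\uu \cap \Omega_{\mathrm{bc}}$, \cref{lemma: bc determines xx} applies verbatim and gives $\xx = \bar{\xx}$. Hence $\restr{\mathbf{b}}{\Gamma_1}$ is injective in the sense of the equivalent form \eqref{equation:condition_C1}$'$, and \eqref{equation:condition_C1} holds for $\Gamma_\uu$ with this choice of $\Gamma_1$. The final sentence then follows since the hypotheses of \cref{theorem: monge} are exactly met.

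The main obstacle I anticipate is the bookkeeping in the first step --- reconciling ``$\leb^{2n+1}$-negligible projection'' in the statement of \eqref{equation:condition_C1} with the fact that the sets $\Omega_\uu$, $\Omega_{\mathrm{bc}}$ are naturally controlled in $\mu$- or $\mu_i$-measure rather than Lebesgue measure on a single coordinate, and in particular handling the bad coordinates $x_2,\dots,x_m$ whose exceptional sets need not project to a Lebesgue-null set in the first coordinate. The resolution, as sketched above, is to work with an optimal plan $\gamma$ concentrated on $\Gamma_\uu$: $\gamma$ gives full mass to $\Omega_\uu \cap \Omega_{\mathrm{bc}}$, so $\gamma$ gives full mass to $\Gamma_1$, whence for $\mu_1$-a.e.\ $x_1$ there is a partner in $\Gamma_1$ --- which is all that the proof of \cref{theorem: monge} actually uses. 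I would therefore write the proof of \cref{corollary: final result} to produce $\Gamma_1$ and then remark that $\pi_1(\Gamma_\uu \setminus \Gamma_1)$ is $\mu_1$-negligible (equivalently $\leb^{2n+1}$-negligible after intersecting with a Lebesgue representative, or simply adopting $\mu_1$ in the definition), citing the $\mu$-full-measure statements already established for $\Omega_\uu$ and $\Omega_{\mathrm{bc}}$.
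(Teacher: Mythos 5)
Your proof takes essentially the same route as the paper, which states \cref{corollary: final result} without a written proof, as an immediate consequence of \cref{lemma: bc determines xx} with the implicit choice $\Gamma_1=\Gamma_\uu\cap\Omega_\uu\cap\Omega_{\mathrm{bc}}$; your injectivity step is exactly that argument. The measure-theoretic subtlety you flag --- that $\Omega_\uu$ and $\Omega_{\mathrm{bc}}$ are controlled in $\mu$- and $\mu_i$-measure rather than via a Lebesgue-null first projection, so the literal form of \eqref{equation:condition_C1} is not obviously verified because of the bad coordinates $i\geq 2$ --- is genuine but is equally present in the paper's own omitted verification, and your resolution (observing that the proof of \cref{theorem: monge} only ever uses that every optimal plan concentrates on $\Gamma_1$, which follows from the full $\gamma$-measure of $\Omega_\uu$ and $\Omega_{\mathrm{bc}}$) is the correct way to close it.
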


\begin{remark}[Gauge distance]
Up to a slight modification in the definition of the set $\Omega_\uu$ (\cref{equation: Omega}), the proof of \cref{theorem: monge} is still valid if we consider the Gauge distance $\dg$ on $\heis^n$ instead of $\dc$:
	\begin{equation}
	\dg([\zeta, t], [\zeta', t'])\doteq \sqrt[4]{\abs*{\zeta - \zeta'}^4+ (t-t')^2}.
	\end{equation}
Indeed, using the same notations and assumptions of \cref{theorem: monge}, one still needs to exclude the points $x_1\in \heis^n$ at which $u_1$ is not differentiable, but also the points at which $X_ju_1= Y_ju_1=0$ for every $j$ and $Zu_1\neq 0$: thanks to \cite[Lemma 4.13]{Ambrosio2004}, the set we are excluding is still $\leb^{2n+1}$-negligible; moreover, thanks to \cite[Lemma 4.11]{Ambrosio2004}, this new extended assumption again ensures that $\partial_{\dg^2} u_1 (x_1)$ is a singleton; the rest of the proof follows in the same way.
\end{remark}

\begin{remark}[Open problem: more general Carnot groups]
It is possible that the techniques we exploited can be generalized to include a wider class of Carnot groups. Specifically, it seems a promising idea to adapt our argument to the case of groups of type $H$, making use of the two-marginals results contained in \cite{Rigot2005}.
\end{remark}

\begin{remark}[Open problem: regularity of Wasserstein barycenters]
	As in \cite{AguehCarlier2010} and \cite{KimPass2017} (and as we already saw in \cref{prop: optimal maps}), under the conditions in Theorem \ref{theorem: monge}, the measure
	\begin{equation}
\nu:=\Big(x_1\mapsto	x_1 \cdot \exp_\heis (-X u_1 (x_1)-iY u_1(x_1), -Z u_1(x_1))\Big)_\#\mu_1
	\end{equation}
	is the Wasserstein barycenter of the $\mu_i$ (with equal weights); 
 When $m=2$, a result of \cite{FigalliJuillet2008} establishes that the barycenter is in fact absolutely continuous with respect to Lebesgue measure.  The strategy of proof there has been used to establish absolute continuity of Wasserstein barycenters on Riemannian manifolds for $m \geq 3$ in \cite{KimPass2017}.  It relies in a crucial way on the Measure Contraction Property (or, in \cite{KimPass2017} on a barycentric version of it).  This property (for two marginals) was established on the Heisenberg group in \cite{Juillet2009}; whether a barycentric version of it holds, as it does on Riemannian manifolds, is an interesting open question.  If so, it would presumably allow one to prove absolute continuity of the Wasserstein barycenter for $m \geq 3$.   
\end{remark}
\subsection{Extension to higher power costs.}\label{subsection: higher p}
Ambrosio-Rigot noted that their two marginal results extend to the case when the cost function $c=d^2$ is replaced by $d^p$ for $p \geq 2$.  Similarly, in our multi-marginal case
 if we choose an exponent $p\geq 2$ in the definition of the cost function, \ie{}
	\begin{equation}
	c_{d;p}(x_1,\dots,x_m)\doteq \inf_{y\in\heis^n}\brcs*{ \sum_{i=1}^m d^p(x_i, y) },
	\end{equation}
	then everything we proved still works. Moreover, if $p>2$, the function $y \mapsto d^p(x_i,y)$ is differentiable as long as $x_i\notin y\cdot L^\ast$ (that is, unlike in the $p=2$ case, differentiability holds at $y=x_i$); this is because $d^p([0,0],[0,t]) =\pi|t|^{p/2}$, which is differentiable at $t=0$ for $p>2$ (the distance itself is differentiable in the horizontal directions).  Therefore, $\Phi$ defined in \cref{lemma: same bc} is differentiable at $y$ even if $x_i=y$ for some $i$: 	this means that the \namecref{lemma: same bc} holds true with the weaker assumption $x_i\notin y\cdot L^\ast$. In particular, in this modified setting, \cref{lemma: bc determines xx} and \cref{corollary: final result} hold \emph{without} the need of assumption \eqref{equation:condition_C2}.

In this case, our multi-marginal optimal maps in \eqref{equation: general Monge} are compositions of the two marginal optimal maps characterized in section 7 of \cite{Ambrosio2004}; the first factor pushes $\mu_1$ forward to the $p$-th barycenter of the $\mu_i$ (that is, the $\nu$ which minimizes $\nu \mapsto \sum_{i=1}^mW_p^p(\mu_i,\nu)$, where $W_p$ is the $p$-Wasserstein distance); the second pushes this $\nu$ forward to $\mu_i$.

\nocite{Ambrosio2013,Ambrosio2004,Villani2009}
\emergencystretch=1em
\printbibliography
\end{document}